\DeclareMathOperator{\Ker}{Ker}
\theoremstyle{plain}
\newtheorem{theorem}{Theorem}[section]
\newtheorem*{theorem*}{Theorem}
\newtheorem{definition}[theorem]{Definition}
\newtheorem{prop}[theorem]{Proposition}
\newtheorem{cor}[theorem]{Corollary}
\newtheorem{rem}[theorem]{Remark}
\newtheorem{ex}[theorem]{Example}
\newtheorem*{mt*}{Main Theorem}
\newcommand\R{{\mathbb R}}
\newcommand\Z{{\mathbb Z}}
\newcommand{\del}{\partial}
\newcommand{\delbar}{\overline{\del}}
\DeclareMathOperator{\Imm}{Im}
\title[Symplectic cohomologies and deformations]{Symplectic cohomologies and deformations}
\author[Nicoletta Tardini]{Nicoletta Tardini}
\address[Nicoletta Tardini]{Dipartimento di Matematica\\
Universit\`a di Pisa\\
largo Bruno Pontecorvo 5\\
56127 Pisa, Italy
}
\email{nicoletta.tardini@gmail.com}
\email{tardini@mail.dm.unipi.it}
\author[Adriano Tomassini]{Adriano Tomassini}
\address[Adriano Tomassini]{Dipartimento di Scienze Matematiche, Fisiche e Informatiche\\
Plesso Matematico e Informatico\\
Università di Parma\\
Parco Area delle Scienze 53/A\\
43124 Parma, Italy
}
\email{adriano.tomassini@unipr.it}
\keywords{symplectic deformation; symplectic structure; cohomology}
\thanks{\newline The first-named author is supported by SIR2014 project RBSI14DYEB ``Analytic aspects in complex and hypercomplex geometry'' and by GNSAGA of INdAM.
The second-named author is supported by Project PRIN ``Varietà reali e complesse: geometria, topologia e analisi armonica'' and by GNSAGA of INdAM}
\subjclass[2010]{32Q60, 53C15, 58A12, 53D05}
\begin{document}

\maketitle

\begin{abstract}
In this note we study the behavior of symplectic cohomology groups under symplectic deformations. Moreover, we show that for compact almost-K\"ahler manifolds $(X,J,g,\omega)$ with $J$ $\mathcal{C}^\infty$-pure and full the space of de Rham harmonic forms is contained in the space of symplectic-Bott-Chern harmonic forms. 
Furthermore, we prove that the second non-HLC degree measures the gap between the de Rham and the symplectic-Bott-Chern harmonic forms.
\end{abstract}

\section{Introduction}

A very special class of smooth manifolds is represented by \emph{K\"ahler manifolds}; these are indeed endowed with three geometric structures which are compatible to each other. More precisely, K\"ahler manifolds have a \emph{complex} structure, a \emph{metric} structure and a \emph{symplectic} structure which are related. This relation leads to important results for K\"ahler manifolds, even at a topological level. For instance, there are topological obstructions to the existence of K\"ahler metrics on compact manifolds: the odd Betti numbers are even and the even Betti numbers are positive. Cohomology groups are global tools in studying smooth manifolds and it is interesting to notice that when we weaken the K\"ahler assumption new cohomology groups occur which are not isomorphic to the classical ones. In particular, these cohomology groups provide additional informations on non-K\"ahler manifolds.\\
In this note, we deal with symplectic manifolds which are not K\"ahler in general. Let $(X^{2n},\omega)$ be a symplectic manifold of dimension $2n$, then L.S. Tseng and S.-T. Yau in \cite{TY} introduce the symplectic analogue of the complex Bott-Chern and Aeppli cohomology groups. Namely, denoting with $d^\Lambda$ the \emph{symplectic co-differential} they define the {\em symplectic Bott-Chern} and the {\em symplectic Aeppli cohomology groups} respectively as
\[
H^\bullet_{d+d^\Lambda}\left(X\right)
:=\frac{\ker(d+d^\Lambda)}{\Imm dd^\Lambda}\,,
\qquad
H^\bullet_{dd^\Lambda}\left(X\right)
:=\frac{\ker(dd^\Lambda)}{\left(\Imm d+\Imm d^\Lambda\right)}.
\]
If $X$ is compact, by Hodge theory these cohomology groups are finite-dimensional; more precisely, they are isomorphic to the kernel of suitable
$4^{\text{th}}$-order elliptic self-adjoint differential operators. These cohomology groups have been introduced since the de Rham cohomology is not the appropriate cohomology to talk about symplectic-Hodge theory. Indeed, the existence of a $d$-closed and $d^\Lambda$-closed representative in every de Rham cohomology class turns out to be equivalent to the \emph{Hard-Lefschetz condition} (cf. \cite{brylinski}, \cite{mathieu}, \cite{merkulov},
\cite{yan}, \cite{cavalcanti-phd}), i.e., the maps
\[
L^k=[\omega]^k:H^{n-k}_{dR}(X)\longrightarrow H^{n+k}_{dR}(X), \qquad 0\leq k\leq n
\]
are isomorpisms.
Moreover, this is also equivalent to the \emph{$dd^\Lambda$-lemma}, namely all the natural maps induced by the identity in the diagram
$$ \xymatrix{
  & H^{\bullet}_{d+d^\Lambda}(X) \ar[ld]\ar[rd] & \\
  H^{\bullet}_{dR}(X) \ar[rd] &  & H^{\bullet}_{d^\Lambda}(X) \ar[ld] \\
  & {\phantom{\;.}} H^{\bullet}_{dd^\Lambda}(X) \; &
} $$
are isomorphisms (where $H^\bullet_{d^\Lambda}\left(X\right)
:=\frac{\ker(d^\Lambda)}{\Imm d^\Lambda}$).\\
In dimension $2n=4$ a quantitative characterization of the Hard-Lefschetz condition has been proven in \cite{tardini-tomassini-symplectic} in terms of the vanishing of a natural number called $\Delta^2_s$.
In particular, if $\omega$ is the fundamental form of a K\"ahler metric then all those cohomology groups are isomorphic. Therefore they provide additional informations on non-K\"ahler symplectic manifolds.\\
Notice that these cohomology groups, by definition, are invariant under symplectomorphisms. In this note we study the behavior of the dimension of these cohomology groups under symplectic deformations. According to \cite{debartolomeis}, roughly speaking $H^2_{dR}(X;\mathbb{R})$
represents the tangent space of the moduli space of symplectic deformations and the theory is totally unobstructed. Once shown in Proposition \ref{symplectic-semicontinuity} that if we
consider a small deformation
of symplectic structures $\left\lbrace\omega_t\right\rbrace_t$ on $X$ such that
$\omega_0=\omega$
then the functions
$$
t\mapsto \dim H^\bullet_{d+d^{\Lambda_t}}
$$
are upper-semicontinuous functions of $t$, we construct an explicit example where the dimension drops.
Another example with a similar aim is constructed taking into account other symplectic cohomologies. For any $r,s\in\mathbb{N}$, we consider
$$
H^{(r,s)}_\omega(X\,;\,\mathbb{R})\,:=\,
\left\lbrace[L^r\beta^{(s)}]\in H^{2r+s}_{dR}(X\,;\,\mathbb{R})\,:\,
\beta^{(s)}\in \ker\Lambda\cap\Omega^s(X)\right\rbrace\subseteq H^{2r+s}_{dR}(X\,;\,\mathbb{R})\,.
$$
By \cite[Corollary 2.5]{angella-tomassini-symplectic} if $(X,\omega)$ satisfies the Hard-Lefschetz condition then the Lefschetz decomposition on differential forms passes in cohomology, that is
\begin{equation}\label{omega-decomposition}
H^\bullet_{dR}(X\,;\,\mathbb{R})=\bigoplus_{r\in\mathbb{N}}
H^{(r,\bullet-2r)}_\omega(X\,;\,\mathbb{R})\,.
\end{equation}
On degree $2$ by \cite[Theorem 2.6]{angella-tomassini-symplectic} the decomposition is always true (on any dimension) leading therefore to the symplectic analogue of the T. Dr\v{a}ghici, T.-J. Li and W. Zhang's \cite[Theorem 2.3]{DLZ} in the complex setting (true only on real dimension $4$).
With an explicit example we show that the decomposition in higher degree is not closed under symplectic deformations. In particular, we will see that the dimensions of the cohomology groups $H^{(r,s)}_\omega(X\,;\,\mathbb{R})$ change along a deformation.\\
We can resume the results in the following
\begin{prop}
Let $X:=\Gamma\backslash G$ be the $6$-dimensional nilmanifold with structure equations
$$
\left(0,0,0,12,14,15+23+24\right),
$$
and let $\omega$ be the invariant symplectic structure on $X$ defined by
$\omega=e^{16}+e^{25}-e^{34}$. Then there exists a symplectic deformation
$\left\lbrace\omega_t\right\rbrace_t$ such that
\begin{itemize}
\item[1.] the dimensions of $H^\bullet_{d+d^{\Lambda_t}}\left(X\right)$ drop varying $t$;
\item[2.] the dimensions of $H^{(r,s)}_{\omega_t}(X\,;\,\mathbb{R})$ jump varying $t$;
\item[3.] if $t\neq 0$ we have the following decomposition
$$
H^{(0,3)}_{\omega_t}(X\,;\mathbb{R})\oplus
H^{(1,1)}_{\omega_t}(X\,;\mathbb{R})\,=\,
H^3_{dR}(X\,;\,\mathbb{R})\,.
$$
However, for $t=0$ we have
$$
H^{(0,3)}_{\omega}(X\,;\mathbb{R})+
H^{(1,1)}_{\omega}(X\,;\mathbb{R})\subsetneq
H^3_{dR}(X\,;\,\mathbb{R})
$$
and
$$
H^{(0,3)}_{\omega}(X\,;\mathbb{R})\cap
H^{(1,1)}_{\omega}(X\,;\mathbb{R})\neq \left\lbrace 0\right\rbrace\,.
$$
\end{itemize}
In particular, if $X$ does not satisfy the Hard-Lefschetz Condition then the decomposition (\ref{omega-decomposition}) is not closed under symplectic deformations.
\end{prop}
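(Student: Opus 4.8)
The plan is to carry out every computation on the Chevalley--Eilenberg complex $\left(\Lambda^\bullet\g^*,d\right)$ of the nilpotent Lie algebra $\g$ with $X=\Gamma\backslash G$. For de Rham cohomology this is Nomizu's theorem; for the groups $H^{(r,s)}_{\omega_t}(X;\R)$ and $H^\bullet_{d+d^{\Lambda_t}}(X)$ it follows from a symmetrization argument, valid because the deformation will be chosen through \emph{invariant} symplectic forms $\omega_t$ (averaging over $\Gamma\backslash G$ commutes with $d$, with $L_{\omega_t}=\omega_t\wedge\cdot$ and with $\Lambda_{\omega_t}$, hence with $d^{\Lambda_t}$ and $*_{\omega_t}$), so that each class has an invariant representative of the required shape; the one delicate point is this reduction for the symplectic Bott--Chern cohomology, since invariant forms need not compute Bott--Chern-type cohomologies in general, and here one invokes the relevant statement for (nil)manifolds or checks it directly. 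From $de^1=de^2=de^3=0$, $de^4=e^{12}$, $de^5=e^{14}$, $de^6=e^{15}+e^{23}+e^{24}$ one records the closed and exact invariant forms in each degree: $b_1=3$, $b_2=5$, $b_3=6$, $H^1_{dR}(X)=\langle[e^1],[e^2],[e^3]\rangle$, $d\bigl(\Lambda^4\g^*\bigr)=\langle e^{12345},e^{12346},e^{12356}\rangle$ (so $H^5_{dR}(X)$ has basis $[e^{12456}],[e^{13456}],[e^{23456}]$), and — the key observation — $[\omega^2\wedge e^i]=0$ in $H^5_{dR}(X)$ for $i=1,2,3$, i.e.\ $L_\omega^2\colon H^1_{dR}(X)\to H^5_{dR}(X)$ vanishes. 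Finally one builds the deformation $\{\omega_t\}_t$ as an explicit polynomial curve of $d$-closed invariant $2$-forms with $\omega_0=\omega$ and $\omega_t^3=\omega^3+O(t)\neq0$ near $0$.

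For item $(1)$: compute $\ker\bigl(d+d^{\Lambda_t}\bigr)$ and $\Imm\bigl(dd^{\Lambda_t}\bigr)$ degree by degree on $\Lambda^\bullet\g^*$ and read off $\dim H^k_{d+d^{\Lambda_t}}(X)$ as a function of $t$. By Proposition \ref{symplectic-semicontinuity} this function is upper-semicontinuous, hence bounded above by its value at $t=0$ and locally constant off a proper analytic set; so it suffices to exhibit one degree $k$ in which the generic value is strictly smaller, which the explicit comparison provides. (Being a non-toral nilmanifold, $X$ never satisfies the $dd^\Lambda$-lemma — Benson--Gordon — so along the whole deformation these dimensions stay strictly above the Betti numbers; only their excess changes.)

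For items $(2)$ and $(3)$: translate the symplectic cohomology groups into kernels and images of Lefschetz-type maps on de Rham cohomology. Since $L_\omega\colon\Omega^{n-1}\to\Omega^{n+1}$ is an isomorphism, whence $\omega\wedge\Omega^2=\Omega^4$, one gets $H^{(1,1)}_{\omega_t}=L_{\omega_t}\bigl(H^1_{dR}(X)\bigr)$ and $H^{(0,3)}_{\omega_t}=\ker\bigl(L_{\omega_t}\colon H^3_{dR}(X)\to H^5_{dR}(X)\bigr)$; by Poincaré duality the latter map is the transpose of $L_{\omega_t}\colon H^1_{dR}(X)\to H^3_{dR}(X)$, so $\dim H^{(0,3)}_{\omega_t}+\dim H^{(1,1)}_{\omega_t}=6$ for all $t$, and $H^{(0,3)}_{\omega_t}\oplus H^{(1,1)}_{\omega_t}=H^3_{dR}(X)$ holds exactly when $\ker\bigl(L_{\omega_t}^2\colon H^1_{dR}(X)\to H^5_{dR}(X)\bigr)=\ker\bigl(L_{\omega_t}\colon H^1_{dR}(X)\to H^3_{dR}(X)\bigr)$. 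At $t=0$ the former kernel is all of $H^1_{dR}(X)$ (as $L_\omega^2=0$) and the latter is $1$-dimensional, so $H^{(1,1)}_\omega\subsetneq H^{(0,3)}_\omega\subsetneq H^3_{dR}(X)$: this gives the strict inclusion $H^{(0,3)}_\omega+H^{(1,1)}_\omega\subsetneq H^3_{dR}(X)$ and the intersection $H^{(0,3)}_\omega\cap H^{(1,1)}_\omega=H^{(1,1)}_\omega\neq\{0\}$ of $(3)$, while the deformation is tuned so that for $t\neq0$ the two kernels coincide, producing the direct sum. For $(2)$ one treats the remaining bidegrees the same way — for instance $H^{(2,1)}_{\omega_t}=\bigl\{[\omega_t^2\wedge\gamma]\in H^5_{dR}(X):\gamma\in\Lambda^1\g^*,\ \omega_t^2\wedge d\gamma=0\bigr\}$, with $H^{(2,1)}_\omega=H^5_{dR}(X)$ at $t=0$ since the closedness constraint is then empty — and exhibits a pair $(r,s)$ and a value of $t$ with $\dim H^{(r,s)}_{\omega_t}\neq\dim H^{(r,s)}_\omega$. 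The concluding assertion is then immediate: $X$ fails Hard--Lefschetz (Benson--Gordon, being a non-toral nilmanifold), yet by $(3)$ the decomposition (\ref{omega-decomposition}) holds in degree $3$ for $\omega_t$, $t\neq0$, and fails for $\omega_0$, so it is not preserved under symplectic deformation.

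The main obstacles I anticipate are: $(a)$ justifying that invariant forms compute the symplectic Bott--Chern cohomology of this $X$, which is what makes $(1)$ a finite linear-algebra problem; and $(b)$ producing the deformation, which must simultaneously drop a Bott--Chern dimension, move an $H^{(r,s)}$ dimension, and make the degree-$3$ decomposition direct for $t\neq0$ — the structural constraint $\operatorname{rank}L_{\omega_t}^2\le2$ for all $t$ (the induced pairing on $H^1_{dR}(X)$ being antisymmetric on a $3$-dimensional space) already shows that a naive affine deformation will not do, so a higher-order correction is forced.
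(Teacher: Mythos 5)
Your structural reductions are correct and are genuinely sharper than the paper's brute-force computations: on invariant forms one does have $H^{(1,1)}_{\omega_t}=\Imm\bigl(L_{\omega_t}\colon H^1_{dR}\to H^3_{dR}\bigr)$ and $H^{(0,3)}_{\omega_t}=\ker\bigl(L_{\omega_t}\colon H^3_{dR}\to H^5_{dR}\bigr)$, the two dimensions always sum to $b_3=6$, and directness in degree $3$ is equivalent to $\ker\bigl(L^2_{\omega_t}|_{H^1}\bigr)=\ker\bigl(L_{\omega_t}|_{H^1}\bigr)$; your analysis of $t=0$ (namely $L^2_\omega=0$ on $H^1$, $\ker L_\omega=\langle[e^1]\rangle$, hence $H^{(1,1)}_{\omega}\subsetneq H^{(0,3)}_{\omega}$) agrees with the paper's explicit bases. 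But the proof stops exactly where the proposition begins: the statement is an existence claim about a specific deformation, and you never produce one. The paper's proof \emph{is} the explicit choice $\omega_t=\omega+t(e^{26}-e^{45})$, the reduction to invariant forms (Angella--Kasuya for $H^\bullet_{d+d^{\Lambda_t}}$, Angella--Tomassini for the $H^{(r,s)}_{\omega_t}$), and the verification that $h^2_{d+d^{\Lambda_t}}$ drops from $8$ to $6$ while $h^{(1,1)}$ jumps from $2$ to $3$ and $h^{(0,3)}$ from $4$ to $3$. Your ``one builds the deformation, tuned so that the two kernels coincide'' defers precisely this content, and you yourself list it as an unresolved obstacle.

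Worse, the tuning you propose for item 3 cannot be carried out on this nilmanifold, by your own criterion. Writing a general invariant closed $2$-form as $\sigma=a\,\omega+b\,(e^{26}-e^{45})+\sum a_{ij}e^{ij}$ (with $e^{ij}$ running over the closed monomials), one checks: if $b\neq 0$ then $L_\sigma$ is injective on $H^1$ (e.g.\ $[\sigma\wedge e^1]=b[e^{126}-e^{145}]\neq 0$) while the antisymmetric pairing $\int\sigma^2\wedge\cdot\wedge\cdot$ has $1$-dimensional radical; if $b=0$ then $\sigma^2\wedge e^i$ is exact for $i=1,2,3$, so the radical is all of $H^1$ while $\ker L_\sigma=\langle[e^1]\rangle$. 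In no case does $\ker L^2_\sigma|_{H^1}$ equal $\ker L_\sigma|_{H^1}$, so no invariant deformation makes the degree-$3$ sum direct; moreover, under your hypothetical tuning $h^{(1,1)}$ and $h^{(0,3)}$ would not move, and your fallback $h^{(2,1)}$ equals $b_5$ for every invariant symplectic form, so item 2 would also lack a witness. Applied to the paper's own deformation, your criterion produces the nonzero class $[\omega_t\wedge(e^1+te^2)]=-t\,[e^{145}+e^{234}+te^{245}]$ lying in $H^{(1,1)}_{\omega_t}\cap H^{(0,3)}_{\omega_t}$ (indeed $\omega_t^2\wedge(e^1+te^2)=-2e^{12345}=-2\,de^{2346}$, while $e^{145}+e^{234}+te^{245}$ is not exact), so the criterion is in genuine tension with the asserted direct sum itself; in your write-up this surfaces only as the vague remark that ``a higher-order correction is forced'' and is never resolved. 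As it stands, items 1 and 2 are plans rather than proofs, and the plan for item 3 would fail if executed.
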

In the second part of this note we show that the de Rham cohomology and the symplectic Bott-Chern cohomology are strictly related on compact almost-K\"ahler manifolds $(X^{2n},J,g,\omega)$ with $J$ $\mathcal{C}^\infty$-pure and full; more precisely one has the inclusion
$$
\mathcal{H}^2_{dR}(X)\subseteq \mathcal{H}^2_{d+d^\Lambda}(X)
$$
of the spaces of harmonic forms in degree $2$. In particular, on compact almost-K\"ahler $4$-manifold one gets the following (cf. Corollary \ref{cor:delta2-dimensionofv})
\begin{cor}
Let $(X^4,\omega)$ be a compact symplectic $4$-manifold and let $g$ be a compatible Riemannian metric. Then, the non-HLC degrees are
$$
\Delta_s^1=0 \quad\text{and}\quad
\Delta_s^2=\dim\left(\mathcal{H}^2_{d+d^\Lambda}(X)\cap \Imm d\right)\,.
$$
Moreover,
$\Delta_s^2=0$ if and only if the Hard-Lefschetz Condition holds on $(X^4\,,\omega)$.
\end{cor}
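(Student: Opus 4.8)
The plan is to read the Corollary off the inclusion of harmonic spaces established above. First note that the hypotheses are automatically in force here: every almost-complex structure on a compact $4$-manifold is \Cpf{} by \cite[Theorem 2.3]{DLZ}, so the almost-complex structure $J$ determined by the compatible pair $(\omega,g)$ makes $(X^{4},J,g,\omega)$ an almost-K\"ahler manifold with $J$ \Cpf{}; hence $\mathcal{H}^{2}_{dR}(X)\subseteq\mathcal{H}^{2}_{d+d^{\Lambda}}(X)$. It then remains to treat, separately, the vanishing of $\Delta^{1}_{s}$, the formula for $\Delta^{2}_{s}$, and the Hard-Lefschetz equivalence.

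For $\Delta^{1}_{s}=0$ I would check that the three cohomologies agree in degree $1$, on any compact symplectic manifold. Since $\Lambda$ vanishes on $1$-forms, $d^{\Lambda}\alpha=-\Lambda\,d\alpha\in\Omega^{0}(X)$ is zero whenever $d\alpha=0$; and, writing $d^{\Lambda}=\pm\,*_{s}d*_{s}$ with $*_{s}$ the symplectic star operator, one finds that $d^{\Lambda}\Omega^{1}(X)$ is exactly the space of functions with vanishing integral (a top-degree form on a compact connected manifold being exact precisely when it integrates to zero), so $dd^{\Lambda}\Omega^{1}(X)=d\Omega^{0}(X)$. Hence $H^{1}_{d+d^{\Lambda}}(X)=(\ker d\cap\Omega^{1}(X))/dd^{\Lambda}\Omega^{1}(X)=(\ker d\cap\Omega^{1}(X))/d\Omega^{0}(X)=H^{1}_{dR}(X)$, and likewise (or by the symplectic-star duality) $\dim H^{1}_{dd^{\Lambda}}(X)=b_{1}(X)$; so $\Delta^{1}_{s}=0$.

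For $\Delta^{2}_{s}$ the key step is the $g$-orthogonal decomposition
\[
\mathcal{H}^{2}_{d+d^{\Lambda}}(X)=\mathcal{H}^{2}_{dR}(X)\ \oplus\ \bigl(\mathcal{H}^{2}_{d+d^{\Lambda}}(X)\cap\Imm d\bigr).
\]
Any $\alpha\in\mathcal{H}^{2}_{d+d^{\Lambda}}(X)$ is $d$-closed, so by the de Rham--Hodge decomposition $\alpha=h+d\beta$ with $h$ the $g$-harmonic representative of $[\alpha]_{dR}$; since $h\in\mathcal{H}^{2}_{dR}(X)\subseteq\mathcal{H}^{2}_{d+d^{\Lambda}}(X)$ — this is the one place where the inclusion recalled above is used — and $\mathcal{H}^{2}_{d+d^{\Lambda}}(X)$ is a linear subspace, the exact part $d\beta=\alpha-h$ again lies in $\mathcal{H}^{2}_{d+d^{\Lambda}}(X)$, hence in $\mathcal{H}^{2}_{d+d^{\Lambda}}(X)\cap\Imm d$; the sum is direct because a $g$-harmonic exact form vanishes. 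Taking dimensions and recalling that $\Delta^{2}_{s}=\dim H^{2}_{d+d^{\Lambda}}(X)-b_{2}(X)$ — this being the content, recalled above, that $\Delta^{2}_{s}$ measures the gap between the symplectic-Bott-Chern and the de Rham harmonic $2$-forms — I conclude $\Delta^{2}_{s}=\dim\mathcal{H}^{2}_{d+d^{\Lambda}}(X)-b_{2}(X)=\dim\bigl(\mathcal{H}^{2}_{d+d^{\Lambda}}(X)\cap\Imm d\bigr)$.

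Finally, $\Delta^{2}_{s}=0$ if and only if the Hard-Lefschetz Condition holds on $(X^{4},\omega)$: this is the quantitative characterization of \cite{tardini-tomassini-symplectic}, and it also falls out of the two previous points, since on $X^{4}$ one has $\Delta^{0}_{s}=\Delta^{4}_{s}=0$ trivially and $\Delta^{1}_{s}=\Delta^{3}_{s}=0$ by the first step (using $\Delta^{k}_{s}=\Delta^{2n-k}_{s}$, a consequence of the symplectic-star duality), so $\Delta^{2}_{s}=0$ is equivalent to $\Delta^{\bullet}_{s}\equiv 0$, equivalently to the $dd^{\Lambda}$-lemma, equivalently to the Hard-Lefschetz Condition. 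I expect the real work to be entirely in the inclusion $\mathcal{H}^{2}_{dR}(X)\subseteq\mathcal{H}^{2}_{d+d^{\Lambda}}(X)$ invoked in the third paragraph, which we are taking as given; granting it, the only mildly delicate points are the identification of $d^{\Lambda}\Omega^{1}(X)$ needed for $\Delta^{1}_{s}=0$ and keeping careful track of which subspace the exact summand of a symplectic-Bott-Chern harmonic $2$-form belongs to.
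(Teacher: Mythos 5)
Your proposal is correct and follows essentially the same route as the paper: on $X^4$ the almost-complex structure is $\mathcal{C}^\infty$-pure-and-full by \cite{DLZ}, so $\mathcal{H}^2_{dR}(X)\subseteq\mathcal{H}^2_{d+d^\Lambda}(X)$, and since every $\Delta_{d+d^\Lambda}$-harmonic $2$-form is $d$-closed, the de Rham--Hodge decomposition identifies the complement $V$ with $\mathcal{H}^2_{d+d^\Lambda}(X)\cap\Imm d$, giving $\Delta^2_s=\dim V$, while the HLC equivalence comes from \cite{tardini-tomassini-symplectic}. The only difference is cosmetic: where the paper simply cites \cite[Theorem 4.3]{tardini-tomassini-symplectic} for $\Delta^1_s=0$, you reprove it directly via $dd^{\Lambda}\Omega^1(X)=d\Omega^0(X)$, which is fine and consistent with the cited argument.
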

Explicit examples are also computed. In particular, we describe an explicit example of an almost-K\"ahler $6$-dimensional manifold $X$ with $J$ non $\mathcal{C}^\infty$-full but such that
$\mathcal{H}^2_{dR}(X)\subseteq \mathcal{H}^2_{d+d^\lambda}(X)$. Moreover,
$\mathcal{H}^3_{dR}(X)\not\subseteq\mathcal{H}^3_{d+d^\lambda}(X)$.

\section{Preliminaries}\label{preliminaries}

Let $X$ be a compact smooth manifold of even dimension $2n$ and denote by $\Omega^k(X)$ the space of smooth $k$-forms on $X$. Suppose that $X$ admits a symplectic structure $\omega$;
then many cohomology groups can be defined on $(X,\omega)$. In particular,
Tseng and Yau in \cite{TY}, noticing that the de Rham cohomology is not the appropriate cohomology to talk about symplectic Hodge theory, define a symplectic version of the Bott-Chern and 
the Aeppli cohomology groups (cf. \cite{aeppli}, \cite{bott-chern}). We denote with
$\star:\Omega^\bullet(X)\longrightarrow \Omega^{2n-\bullet }(X)$ 
the {\em symplectic-$\star$-Hodge operator} (see \cite{brylinski}) which is defined, for every $k\in \mathbb{N}$ and for every $\alpha,\beta\in\Omega^k(X)$, by
$$
\alpha\wedge\star\beta=(\omega^{-1})^k(\alpha,\beta)\frac{\omega^n}{n!}
$$
and let
$\Lambda:\Omega^\bullet(X)\longrightarrow \Omega^{\bullet -2}(X)$ be the adjoint of the
Lefschetz operator $L:=\omega\wedge-:\Omega^\bullet(X)\longrightarrow \Omega^{\bullet +2}(X)$; then
the {\em symplectic co-differential} is defined as
\[
d^\Lambda:=\left[d,\Lambda\right]=d\Lambda-\Lambda d
\]
and it turns out that the operator $d^\Lambda$ is the symplectic adjoint of the exterior derivative $d$, that is
$$
d^\Lambda_{\mid \Omega^k(X)}\,=\,(-1)^{k+1}\star d\star_{\mid \Omega^k(X)}.
$$
Since $\star^2\,=\,\text{Id}$, then $(d^\Lambda)^2\,=\,0$. 
In particular, it is natural to define (see \cite{TY})
$$
H^\bullet_{d^\Lambda}(X)\,:=\,\frac{\ker d^\Lambda}{\Imm d^\Lambda}\,.
$$
By the above considerations, it seems that $d^\Lambda$ is the symplectic counterpart of the Riemannian co-differential $d^*$. For this reason, J.-L. Brylinski in \cite{brylinski} defined a \emph{symplectic-harmonic form} to be a both $d$-closed and $d^\Lambda$-closed differential form, conjecturing that
there exists a (possibly non-unique) symplectic harmonic representative in every de Rham cohomology class (\cite[Conjecture 2.2.7]{brylinski}).
However, even if $d^\Lambda$ is the symplectic adjoint of $d$, notice that $dd^\Lambda+d^\Lambda d$ is not an elliptic operator, indeed
$dd^\Lambda+d^\Lambda d=0$.
This motivated the introduction of the {\em symplectic Bott-Chern cohomology groups} and the {\em symplectic Aeppli cohomology groups} defined respectively as (see \cite{TY})
\[
H^k_{d+d^\Lambda}\left(X\right)
:=\frac{\ker(d+d^\Lambda)\cap \Omega^k(X)}{\Imm dd^\Lambda\cap \Omega^k(X)}
\]
and 
\[
H^k_{dd^\Lambda}\left(X\right)
:=\frac{\ker(dd^\Lambda)\cap \Omega^k(X)}{\left(\Imm d+\Imm d^\Lambda\right)\cap \Omega^k(X)}.
\]
If we consider a compatible triple $(\omega,J,g)$ on $X$
(meaning that the almost-complex structure $J$ is $\omega$-calibrated and $g$ is the corresponding 
Riemannian metric on $X$) then, denoting with $*$ the standard \emph{Hodge-operator}
with respect to the Riemannian metric $g$,
there are canonical isomorphisms (see \cite{TY})
\[
\mathcal{H}^k_{d^\Lambda}\left(X\right):=\ker\Delta_{d^\Lambda}
\simeq H^k_{d^\Lambda}\left(X\right),
\]
where $\Delta_{d^{\Lambda}} :=  d^{\Lambda*}d^\Lambda+d^\Lambda d^{\Lambda*}$ is a second-order elliptic self-adjoint
differential operator and
\[
\mathcal{H}^k_{d+d^\Lambda}\left(X\right):=\ker\Delta_{d+d^\Lambda}
\simeq H^k_{d+d^\Lambda}\left(X\right),\qquad
\mathcal{H}^k_{dd^\Lambda}\left(X\right):=\ker\Delta_{dd^\Lambda}
\simeq H^k_{dd^\Lambda}\left(X\right).
\]
where
$\Delta_{d+d^{\Lambda}}$, $\Delta_{dd^{\Lambda}}$ are fourth-order elliptic self-adjoint
differential operators defined by
\[
\begin{array}{lcl}
\Delta_{d+d^{\Lambda}}& := &(dd^{\Lambda})(dd{^\Lambda})^*+(dd^{\Lambda})^*(dd^{\Lambda})+
d^*d^{\Lambda} d^{\Lambda *}d+d^{\Lambda *}d d^*d^{\Lambda}+d^*d+d^{\Lambda *}d^{\Lambda},\\[10pt]
\Delta_{dd^{\Lambda}} & :=& (dd^{\Lambda})(dd{^\Lambda})^*+(dd^{\Lambda})^*(dd^{\Lambda})+
dd^{\Lambda *}d^\Lambda d^*+d^\Lambda d^*dd^{\Lambda *}+dd^*
+d^\Lambda d^{\Lambda *}.
\end{array}
\]
In particular, the symplectic cohomology groups are finite-dimensional vector spaces on a compact
symplectic manifold. For
$\sharp\in\left\{d^\Lambda,d+d^\Lambda,
dd^\Lambda\right\}$ we denote $h^\bullet_\sharp:=:h^\bullet_\sharp(X):= \dim H^\bullet_\sharp(X)<\infty$ when the manifold $X$ is understood.\\
By definition, the identity induces natural maps as follows
$$ \xymatrix{
  & H^{\bullet}_{d+d^\Lambda}(X) \ar[ld]\ar[rd] & \\
  H^{\bullet}_{dR}(X,\mathbb{R}) \ar[rd] &  & H^{\bullet}_{d^\Lambda}(X). \ar[ld] \\
  & {\phantom{\;.}} H^{\bullet}_{dd^\Lambda}(X) \; &
} $$
In general, these maps are neither injective nor surjective; in particular
a compact symplectic manifold $(X,\omega)$ is said to satisfy the \emph{$dd^\Lambda$-lemma} (see \cite[Definition 3.12]{TY}) if the natural map
$H^\bullet_{d+d^\Lambda}(X)\to H^{\bullet}_{dR}(X,\mathbb{R})$ is injective, i.e.,
every $d^\Lambda$-closed, $d$-exact form
is also $dd^\Lambda$-exact.\\
By \cite[Lemma 5.15]{deligne-griffiths-morgan-sullivan} this is equivalent to say that all the maps in the above diagram are isomorphisms. In fact, this property is related to the Hard-Lefschetz condition and to the Brylinski conjecture. We resume all the known results just recalled in the following
(cf. \cite[Conjecture 2.2.7]{brylinski}, \cite[Corollary 2]{mathieu}, \cite[Proposition 1.4]{merkulov},
\cite[Theorem 0.1]{yan}, \cite[Theorem 5.4]{cavalcanti-phd}, 
\cite[Proposition 3.13]{TY} \cite[Theorem 4.4]{angella-tomassini-algebraic}).
\begin{theorem}
Let $(X,\omega)$ be a compact symplectic manifold. Then the following facts are equivalent:
\begin{itemize}
\item{} the \emph{Hard-Lefschetz condition} (\emph{HLC} for short) holds, i.e., for every $k\in\mathbb{Z}$, the maps
$$
L^k:H^{n-k}_{dR}(X,\mathbb{R})\to H^{n+k}_{dR}(X,\mathbb{R})
$$
are isomorphisms;
\item{} the \emph{Brylinski conjecture} holds, i.e., there exists a
symplectic harmonic form in each de Rham cohomology class;
\item{} the \emph{$dd^\Lambda$-lemma} holds;
\item{} the natural maps induced by the identity $H^{\bullet}_{d+d^\Lambda}(X)
\longrightarrow H^{\bullet}_{dR}(X,\mathbb{R})$ are injective;
\item{} the natural maps induced by the identity $H^{\bullet}_{d+d^\Lambda}(X)
\longrightarrow H^{\bullet}_{dR}(X,\mathbb{R})$ are isomorphisms;
\item{} the natural maps induced by the identity in the following diagram are isomorphisms
$$ \xymatrix{
  & H^{\bullet}_{d+d^\Lambda}(X) \ar[ld]\ar[rd] & \\
  H^{\bullet}_{dR}(X,\mathbb{R}) \ar[rd] &  & H^{\bullet}_{d^\Lambda}(X); \ar[ld] \\
  & {\phantom{\;.}} H^{\bullet}_{dd^\Lambda}(X) \; &
} $$
\item{} for every $k\in\mathbb{N}$, $h^k_{d+d^\Lambda}(X)+h^k_{dd^\Lambda}(X)=2b_k(X)$.
\end{itemize}
\end{theorem}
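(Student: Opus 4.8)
The plan splits into four parts according to the tools involved: (i) a representation‑theoretic equivalence $(1)\Leftrightarrow(2)$; (ii) a symplectic‑Hodge‑theoretic implication $(1)\Rightarrow(3)$; (iii) a purely homological block giving $(3)\Leftrightarrow(4)\Leftrightarrow(5)\Leftrightarrow(6)$ together with the return arrow $(5)\Rightarrow(1)$; and (iv) a dimension count giving $(6)\Leftrightarrow(7)$. The structural input everywhere is the $\mathfrak{sl}(2;\R)$‑action on $\Omega^\bullet(X)$ generated by $L=\omega\wedge\cdot$, its symplectic adjoint $\Lambda$ and the degree‑counting operator, with $[L,d]=0$, $d^\Lambda=[d,\Lambda]$, $(d^\Lambda)^2=0$ and $dd^\Lambda=-d^\Lambda d$. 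For part \textbf{(i)}, $(1)\Rightarrow(2)$: under HLC the Lefschetz decomposition of forms descends to $H^\bullet_{dR}(X,\R)$, so every class has a representative $\sum_r L^r\beta_r$ with each $\beta_r$ primitive and $d$‑closed; since $d^\Lambda\beta=-\Lambda(d\beta)$ on a primitive form $\beta$, such a $\beta_r$ is $d^\Lambda$‑closed, and combining $[L,d]=0$ with $[\Lambda,L^r]$ shows each $L^r\beta_r$, hence the whole representative, is symplectic‑harmonic. For $(2)\Rightarrow(1)$ one feeds the existence of symplectic‑harmonic representatives into the $\mathfrak{sl}(2)$‑module structure to force surjectivity, hence (by Poincar\'e duality) bijectivity, of the Lefschetz maps $L^k\colon H^{n-k}_{dR}(X,\R)\to H^{n+k}_{dR}(X,\R)$. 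This step, \cite{mathieu,yan}, is, as in the complex case, the genuinely non‑formal part and the main obstacle of the statement.

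For part \textbf{(ii)}, $(1)\Rightarrow(3)$: assuming HLC I would show that $\ker\Delta_{d+d^\Lambda}$ coincides with $\ker d\cap\ker d^\Lambda$ in every degree — this is exactly where HLC enters, again via the $\mathfrak{sl}(2)$‑module structure — and then the Hodge decomposition for the elliptic operator $\Delta_{d+d^\Lambda}$, together with the analysis of its image, yields $\Imm d\cap\ker d^\Lambda=\Imm dd^\Lambda$ and, symmetrically, $\Imm d^\Lambda\cap\ker d=\Imm dd^\Lambda$, i.e.\ the $dd^\Lambda$‑lemma (cf.\ \cite{merkulov,cavalcanti-phd}).

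Part \textbf{(iii)} is homological. Since $d$ and $d^\Lambda$ anticommute, square to zero and shift degree by $\pm1$, after a regrading $(\Omega^\bullet(X),d,d^\Lambda)$ is a double complex to which \cite[Lemma~5.15]{deligne-griffiths-morgan-sullivan} applies (cf.\ \cite[Proposition~3.13]{TY}): the $dd^\Lambda$‑lemma, being the exact analogue of the $\del\delbar$‑lemma, forces all the natural maps of the diagram to be isomorphisms, which gives $(3)\Rightarrow(6)$; and $(6)\Rightarrow(5)\Rightarrow(4)$ is tautological. For $(4)\Rightarrow(3)$, injectivity of $H^\bullet_{d+d^\Lambda}(X)\to H^\bullet_{dR}(X,\R)$ says that a $d^\Lambda$‑closed $d$‑exact form is $dd^\Lambda$‑exact, one half of the $dd^\Lambda$‑lemma, and applying the symplectic star — which intertwines $d$ with $\pm d^\Lambda$, hence $dd^\Lambda$ with $\mp dd^\Lambda$, and preserves $\ker(d+d^\Lambda)$ — transports this to the mirror half. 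Finally, to close the loop at $(1)$ I would observe that $L$ and $\Lambda$ descend to $H^\bullet_{d+d^\Lambda}(X)$, making it a finite‑dimensional $\mathfrak{sl}(2;\R)$‑module on which $L^k$ is automatically an isomorphism; via the identification $H^\bullet_{d+d^\Lambda}(X)\cong H^\bullet_{dR}(X,\R)$ of $(5)$, compatible with $L$, this yields $(5)\Rightarrow(1)$.

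For part \textbf{(iv)}, $(6)\Rightarrow(7)$ is immediate, since under $(6)$ one has $h^k_{d+d^\Lambda}(X)=h^k_{dd^\Lambda}(X)=b_k(X)$ for every $k$. For $(7)\Rightarrow(4)$ I would use the a priori Fr\"olicher‑type inequality $h^k_{d+d^\Lambda}(X)+h^k_{dd^\Lambda}(X)\ge 2b_k(X)$, valid on any compact symplectic manifold and proved by comparing the harmonic spaces along the four maps of the diagram, together with the fact — obtained from the same analysis, see \cite[Theorem~4.4]{angella-tomassini-algebraic} — that equality in every degree forces $H^\bullet_{d+d^\Lambda}(X)\to H^\bullet_{dR}(X,\R)$ to be injective, i.e.\ $(4)$. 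Besides the representation‑theoretic step of part (i), the equality case of this inequality is the point needing the most care, since one must pin down precisely which comparison maps are thereby forced to be isomorphisms; with these in hand, parts (i)--(iv) give the equivalence of all seven statements.
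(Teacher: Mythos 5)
The paper itself gives no proof of this theorem: it is stated as a summary of known results, with the argument delegated to the cited literature (Mathieu, Yan, Merkulov, Cavalcanti, Deligne--Griffiths--Morgan--Sullivan, Tseng--Yau, Angella--Tomassini). Your overall architecture follows exactly that route --- Mathieu/Yan for $(1)\Leftrightarrow(2)$, \cite[Lemma 5.15]{deligne-griffiths-morgan-sullivan} and the symplectic star for the block $(3)$--$(6)$, and the equality case of the Fr\"olicher-type inequality \cite[Theorem 4.4]{angella-tomassini-algebraic} for $(7)$ --- so at the level of organization you are consistent with the paper; note also that with the paper's conventions the $dd^\Lambda$-lemma is \emph{defined} as the injectivity of $H^\bullet_{d+d^\Lambda}(X)\to H^\bullet_{dR}(X,\mathbb{R})$, so your $(3)\Leftrightarrow(4)$ via the symplectic star is really the (correct) check that the one-sided and two-sided formulations agree. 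Your $(5)\Rightarrow(1)$ argument (both $L$ and $\Lambda$ preserve $\ker d\cap\ker d^\Lambda$ and $\Imm dd^\Lambda$, so $H^\bullet_{d+d^\Lambda}(X)$ is a finite-dimensional $\mathfrak{sl}(2;\mathbb{R})$-module and satisfies its own Lefschetz property, transported to $H^\bullet_{dR}$ by the $L$-compatible isomorphism of $(5)$) is sound.

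There is, however, one genuine error, and it sits on a load-bearing arrow. For $(1)\Rightarrow(3)$ you propose to show that under HLC $\ker\Delta_{d+d^\Lambda}=\ker d\cap\ker d^\Lambda$ in every degree. This is false: by Tseng--Yau, $\ker\Delta_{d+d^\Lambda}=\ker d\cap\ker d^\Lambda\cap\ker (dd^\Lambda)^*$, and since $\Imm dd^\Lambda\subseteq\ker d\cap\ker d^\Lambda$ while a nonzero form $dd^\Lambda\beta$ can never be $\Delta_{d+d^\Lambda}$-harmonic (harmonicity would give $\|dd^\Lambda\beta\|^2=\langle\beta,(dd^\Lambda)^*dd^\Lambda\beta\rangle=0$), your asserted equality can only hold when $dd^\Lambda\equiv 0$ in that degree; it already fails on the standard symplectic torus or on any K\"ahler manifold, which do satisfy HLC. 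Since in your plan $(1)\Rightarrow(3)$ is the \emph{only} arrow connecting the pair $(1)$--$(2)$ to the homological block $(3)$--$(7)$, the chain of equivalences does not close as written. The repair is either to quote \cite{merkulov} or \cite[Theorem 5.4]{cavalcanti-phd} outright for HLC $\Rightarrow dd^\Lambda$-lemma (which is what the paper does), or to prove $(2)\Rightarrow(3)$ directly, i.e.\ show $\Imm d\cap\ker d^\Lambda=\Imm dd^\Lambda$ using symplectic-harmonic representatives and the star operator, rather than via a description of $\ker\Delta_{d+d^\Lambda}$. A lesser point: in your $(1)\Rightarrow(2)$ sketch the closedness of the primitive components $\beta_r$ is not automatic from $d(L^r\beta_r)=0$ in all degrees (injectivity of $L^r$ on the relevant forms is needed, or duality for degrees above $n$), so this detail should be spelled out or referred to \cite[Corollary 2.5]{angella-tomassini-symplectic}.
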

The last point in the list shows that there is a quantitative characterization of the $dd^\Lambda$-lemma in terms of the dimensions of the symplectic cohomology groups.
More precisely, in \cite{angella-tomassini-algebraic} D. Angella and the second-named author, starting from a purely
algebraic point of view,
introduce on a compact symplectic manifold $(X^{2n},\omega)$
the following non-negative integers 
\[
\Delta^k_s(X)\,:=\, h^k_{d+d^\Lambda}(X)+h^k_{dd^\Lambda}(X)-2b_k(X)\geq 0,
\qquad k\in\mathbb{Z},
\]
proving that, similarly to the complex case, their vanishing characterizes the $dd^\Lambda$-lemma.
As already observed in \cite{tardini-tomassini-symplectic} (see also \cite{chansuen}) we can write the non-HLC degrees
as follows
\[
\Delta^k_s\,=\,2(h^k_{d+d^\Lambda}-b_k),\qquad k\in\mathbb{Z};
\]
Therefore, one has that
for all $k=1,\ldots,n$
\[
b_k\,\leq\, h^k_{d+d^\Lambda}
\]
on a compact symplectic $2n$-dimensional manifold.\\
Moreover the equalities 
\[
b_k\,=\,h^k_{d+d^\Lambda}, \qquad k\in\mathbb{Z},
\]
hold on a compact symplectic $2n$-dimensional manifold if and only if it 
satisfies the Hard-Lefschetz condition.\\
In \cite{tardini-tomassini-symplectic} the authors, studying the similarities and differences with the analogue numbers in complex geometry, show that on any
compact symplectic manifold $\Delta^1_s=0$. In particular
on any compact symplectic $4$-manifold $(X^4\,,\omega)$
\[
HLC \iff \Delta_s^2=0 \iff b_2(X)\,=\,h^2_{d+d^\Lambda}(X).
\]
\begin{rem}
In complex geometry, one can define similarly the natural numbers $\Delta^k$ as in \cite{angella-tomassini-inventiones} and by \cite{teleman} (see also \cite{angella-tomassini-verbitsky}) one has that on a compact complex surface
$\Delta^1=0$ and $\Delta^2\in\left\lbrace 0,2\right\rbrace$. In fact, a similar result does not hold in symplectic geometry. Indeed, in \cite{tardini-tomassini-symplectic} the authors construct an explicit example of a compact symplectic $4$-manifold with $\Delta^2_s=4$. For a more detailed comparison between the complex and symplectic settings see \cite{tardini-proceeding}.
\end{rem}

\begin{rem}
Most of the results holding for the Tseng and Yau's symplectic cohomology groups
can be generalized to the locally conformally symplectic (\emph{lcs} for short) setting as done
by D. Angella, A. Otiman and the first-named author in \cite{angella-otiman-tardini}. A new notion of lcs-Hard-Lefschetz condition is introduced which forces the lcs structure to be globally conformally symplectic.
\end{rem}

Other cohomology groups can be considered on a compact symplectic manifold
$(X,\omega)$ which can give interesting decompositions for the de Rham cohomology.
More precisely, denoting with $P^\bullet(X):=\Ker\Lambda$ the space of \emph{primitive forms}, one can define the following spaces. For any $r,s\in\mathbb{N}$,
$$
H^{(r,s)}_\omega(X\,;\,\mathbb{R})\,:=\,
\left\lbrace[L^r\beta^{(s)}]\in H^{2r+s}_{dR}(X\,;\,\mathbb{R})\,:\,
\beta^{(s)}\in P^s(X)\right\rbrace\subseteq H^{2r+s}_{dR}(X\,;\,\mathbb{R});
$$
In general,
$$
\sum_{2r+s=k}H^{(r,s)}_\omega(X\,;\,\mathbb{R})\subseteq H^k_{dR}(X,\mathbb{R})
$$
but the sum is neither direct nor equal to the de Rham cohomology. 
Namely, in general the Lefschetz decomposition on differential forms \cite[Corollary 2.6]{yan}
$$
\Omega^{\bullet}(X)\,=\,\bigoplus_{r\in\mathbb{N}}L^rP^{\bullet-2r}(X)
$$
does not pass in cohomology.
However,
by \cite[Corollary 2.5]{angella-tomassini-symplectic} if $(X,\omega)$ satisfies the Hard-Lefschetz condition then
$$
H^\bullet_{dR}(X\,;\,\mathbb{R})=\bigoplus_{r\in\mathbb{N}}
H^{(r,\bullet-2r)}_\omega(X\,;\,\mathbb{R})\,.
$$
The total degree two is very special, indeed the decomposition
$$
H^2_{dR}(X\,;\,\mathbb{R})=H^{(1,0)}_\omega(X\,;\,\mathbb{R})
\oplus H^{(0,2)}_\omega(X\,;\,\mathbb{R})
$$
holds on every compact symplectic manifold by \cite[Theorem 2.6]{angella-tomassini-symplectic}. Notice that this decomposition can be considered as the symplectic analogue of the T. Dr\v{a}ghici, T.-J. Li and W. Zhang's \cite[Theorem 2.3]{DLZ} in the complex setting.

\section{Bott-Chern numbers and deformations}

Let $X$ be a smooth manifold and let $\eta$ be a closed $k$-form on $X$. A {\em deformation} of 
$\eta$ is a smooth family of closed $k$-forms $\eta_t$, with $t\in\R$, $\vert t\vert <\varepsilon$ and $\eta_0=\eta$. If $\Phi_t$ is a diffeomorphism flow 
on $X$, then $\eta_t=\Phi^*_t\eta$ is said to be a {\em non essential deformation} of $\eta$. A closed $k$-form $\theta$ is said to be an 
{\em infinitesimal deformation} (respectively {\em non essential infinitesimal deformation}) if there exists a deformation (respectively a non essential 
deformation) $\eta_t$ of $\eta$ such that $\frac{d}{dt}\eta_t\vert_{t=0}=\theta$. It is immediate to see that the space of infinitesimal deformations of 
$\eta$ can be identified with the space of closed $k$-forms on $X$. 

Let $(X,\omega)$ be a $2n$-dimensional closed symplectic manifold, let $\theta$ be a closed $2$-form on $X$ and set $\omega_t=\omega+t\theta$. Then in view of Moser's Lemma, 
$\omega_t=\Phi^*_t\omega$, for a $1$-parameter family of diffeomorphisms $\Phi_t$ of $X$, with $\Phi_0=\hbox{id}_X$ if and only if $\theta=d\beta$, namely the 
tangent space at $\omega$ to the space of germs of deformations 
of the symplectic form $\omega$ can be identified with with $H^2_{dR}(X;\R)$.

Let $(X,\omega)$ be a compact symplectic manifold and consider a small deformation
of symplectic structures $\left\lbrace\omega_t\right\rbrace_t$ on $X$ such that
$\omega_0=\omega$. According to this deformation
we obtain a curve of compatible almost-complex
structures $\left\lbrace J_t\right\rbrace_t$ on $X$. Therefore we can consider the
symplectic Bott-Chern and Aeppli Laplacian operators $\Delta_{d+d^{\Lambda_t}}$
and $\Delta_{dd^{\Lambda_t}}$ on $(X,\omega_t)$ as depending on the parameter $t$.
By the classical theory of elliptic operators we obtain that 
the functions $t\mapsto \dim\Ker\Delta_{d+d^{\Lambda_t}}$
and $t\mapsto \dim\Ker\Delta_{dd^{\Lambda_t}}$ are
upper-semicontinuous functions of $t$. Therefore we have the following
\begin{prop}\label{symplectic-semicontinuity}
Let $(X,\omega)$ be a compact symplectic manifold and consider a small deformation
of symplectic structures $\left\lbrace\omega_t\right\rbrace_t$ on $X$ such that
$\omega_0=\omega$.
The functions
$$t\mapsto h^\bullet_{d+d^{\Lambda_t}}
\qquad
t\mapsto h^\bullet_{dd^{\Lambda_t}}
$$
are upper-semicontinuous functions of $t$.
\end{prop}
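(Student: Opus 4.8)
The plan is to reduce the statement to the classical upper-semicontinuity of the dimension of the kernel of a smooth family of self-adjoint elliptic operators, using the Hodge-theoretic identifications $H^\bullet_{d+d^{\Lambda_t}}(X)\simeq\ker\Delta_{d+d^{\Lambda_t}}$ and $H^\bullet_{dd^{\Lambda_t}}(X)\simeq\ker\Delta_{dd^{\Lambda_t}}$ recalled in Section \ref{preliminaries} (valid for each fixed $t$, once a compatible metric has been chosen). Thus everything comes down to exhibiting $\{\Delta_{d+d^{\Lambda_t}}\}_t$ and $\{\Delta_{dd^{\Lambda_t}}\}_t$ as smooth families of elliptic operators and then quoting the general semicontinuity principle.

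First I would fix an auxiliary Riemannian metric $g_0$ on $X$ (for instance the one attached to $\omega_0=\omega$ and $J_0$) and, for $|t|$ small, produce a smooth family of $\omega_t$-calibrated almost-complex structures $J_t$ with associated metrics $g_t(\cdot,\cdot)=\omega_t(\cdot,J_t\cdot)$. Concretely, one writes $\omega_t(\cdot,\cdot)=g_0(A_t\cdot,\cdot)$ with $A_t$ a smooth family of $g_0$-skew bundle isomorphisms, takes the fiberwise positive-definite square root $P_t:=(-A_t^2)^{1/2}$, and sets $J_t:=A_tP_t^{-1}$; a routine check gives $J_t^2=-\mathrm{Id}$ and shows that $g_t=g_0(\cdot,P_t\cdot)$ is a Riemannian metric calibrated by $\omega_t$ with complex structure $J_t$. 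Since $A_0$ is invertible, $A_t$ is invertible for $|t|$ small, and $J_t$, $g_t$, hence the Hodge star $*_t$, depend smoothly on $t$. This is the standard polar-decomposition construction.

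Now all the operators entering $\Delta_{d+d^{\Lambda_t}}$ and $\Delta_{dd^{\Lambda_t}}$ are built from the $t$-independent operator $d$, from $\Lambda_t$ (the $g_t$-adjoint of $L_t=\omega_t\wedge\cdot$, equivalently an algebraic operator determined pointwise by the nondegenerate form $\omega_t$), and from the Hodge star $*_t$; as $\omega_t$, $g_t$, $*_t$ vary smoothly with $t$, so do $d^{\Lambda_t}$ and the $g_t$-adjoints $d^{*}$, $d^{\Lambda_t *}$, and therefore the fourth-order self-adjoint operators $\Delta_{d+d^{\Lambda_t}}$ and $\Delta_{dd^{\Lambda_t}}$ form smooth families on $\Omega^\bullet(X)$, elliptic for every $t$ (Section \ref{preliminaries}). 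I would then invoke the general fact that, for a smooth family $\{P_t\}$ of non-negative self-adjoint elliptic operators on a compact manifold, $t\mapsto\dim\ker P_t$ is upper-semicontinuous. The argument is standard: near a fixed $t_0$ pick $\varepsilon>0$ with $\varepsilon\notin\mathrm{Spec}(P_{t_0})$; by continuity of the spectral projection $\chi_{[0,\varepsilon)}(P_t)$ (equivalently, of the resolvent along a small loop around $0$) the number $N(t)$ of eigenvalues of $P_t$ in $[0,\varepsilon)$, counted with multiplicity, is constant near $t_0$ and equals $\dim\ker P_{t_0}$, while $\dim\ker P_t\le N(t)$ for every $t$; hence $\dim\ker P_t\le\dim\ker P_{t_0}$ for $t$ near $t_0$. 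Applying this to $P_t=\Delta_{d+d^{\Lambda_t}}$ and to $P_t=\Delta_{dd^{\Lambda_t}}$, and translating through the Hodge isomorphisms, gives the two assertions.

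The only genuine point requiring care is the smooth dependence on $t$ of the full package of operators — in practice the smoothness of $J_t$, $g_t$ and $*_t$, which is why the background metric $g_0$ and the polar decomposition are introduced; once that is in place the conclusion is the black-box semicontinuity theorem for elliptic operators. Note that no unobstructedness or transversality of the symplectic deformation is used here: one only needs $\omega_t$ to remain nondegenerate, so that $\Lambda_t$, and with it $\Delta_{d+d^{\Lambda_t}}$ and $\Delta_{dd^{\Lambda_t}}$, stay smooth and elliptic throughout the deformation.
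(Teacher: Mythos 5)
Your proof is correct and follows essentially the same route as the paper: the paper also chooses a curve of $\omega_t$-compatible almost-complex structures (which your polar-decomposition construction makes explicit), views $\Delta_{d+d^{\Lambda_t}}$ and $\Delta_{dd^{\Lambda_t}}$ as a smoothly varying family of self-adjoint elliptic operators, and concludes by the classical upper-semicontinuity of $t\mapsto\dim\Ker$ for such families, combined with the Hodge-theoretic isomorphisms. You have simply spelled out the details that the paper leaves to ``the classical theory of elliptic operators.''
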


Now we prove the following 
\footnote{Part of these computations have been performed by M. Rinaldi in her Master thesis, Propriet\`a Coomologiche di Variet\`a Simplettiche, advisor Prof. A. Tomassini, Universit\`a di Parma, 2014, \texttt{http://www.bibliomath.unipr.it/TesiDipNuovo.html}; we thank her.}
\begin{prop}
Let $X:=\Gamma\backslash G$ be the $6$-dimensional nilmanifold with structure equations
$$
\left(0,0,0,12,14,15+23+24\right),
$$
and let $\omega$ be the invariant symplectic structure on $X$ defined by
$\omega:=e^{16}+e^{25}-e^{34}$. Then there exists a symplectic deformation
$\left\lbrace\omega_t\right\rbrace_t$ such that
\begin{itemize}
\item[1.] $h^\bullet_{d+d^{\Lambda_t}}\left(X\right)$ drop varying $t$;
\item[2.] $h^{(\bullet,\bullet)}_{\omega_t}(X\,;\,\mathbb{R})$ jump varying $t$;
\item[3.] if $t\neq 0$ we have the following decomposition
$$
H^{(0,3)}_{\omega_t}(X\,;\mathbb{R})\oplus
H^{(1,1)}_{\omega_t}(X\,;\mathbb{R})\,=\,
H^3_{dR}(X\,;\,\mathbb{R})\,.
$$
However, for $t=0$ we have
$$
H^{(0,3)}_{\omega}(X\,;\mathbb{R})+
H^{(1,1)}_{\omega}(X\,;\mathbb{R})\subsetneq
H^3_{dR}(X\,;\,\mathbb{R})
$$
and
$$
H^{(0,3)}_{\omega}(X\,;\mathbb{R})\cap
H^{(1,1)}_{\omega}(X\,;\mathbb{R})\neq \left\lbrace 0\right\rbrace\,.
$$
\end{itemize}
\end{prop}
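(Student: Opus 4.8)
The plan is to carry out all computations on the Chevalley--Eilenberg complex $(\Lambda^\bullet\g^*,d)$ of the $6$-dimensional nilpotent Lie algebra $\g$ with the given structure equations, writing $e^{i_1\cdots i_k}:=e^{i_1}\wedge\cdots\wedge e^{i_k}$. By Nomizu's theorem the inclusion of left-invariant forms computes $H^\bullet_{dR}(X;\R)$; moreover, since $\omega_t$ will be invariant, the operators $d^{\Lambda_t}=[d,\Lambda_t]$, $dd^{\Lambda_t}$ and the associated Bott--Chern and Aeppli Laplacians all restrict to the finite-dimensional space $\Lambda^\bullet\g^*$ (because $d^{\Lambda_t}$ is, up to sign, $\star_t d\star_t$ and the symplectic $\star_t$-operator of an invariant form preserves invariant forms), and I would use the corresponding Nomizu-type reduction to compute $h^\bullet_{d+d^{\Lambda_t}}$, $h^\bullet_{dd^{\Lambda_t}}$ and the subspaces $H^{(r,s)}_{\omega_t}(X;\R)$ on $\Lambda^\bullet\g^*$. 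First I would compute $H^\bullet_{dR}(X;\R)$, obtaining in particular $b_2=5$ and $b_3=6$ together with explicit invariant representatives of bases of $H^2_{dR}(X;\R)$ and $H^3_{dR}(X;\R)$.

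Next I would fix the deformation: choose a closed invariant $2$-form $\theta$ with $[\theta]\neq 0$ in $H^2_{dR}(X;\R)$ and $[\theta]$ not proportional to $[\omega]$ (so that $\omega_t:=\omega+t\theta$ is an \emph{essential} deformation, hence not of the form $\Phi_t^*\omega$, by Moser's lemma), noting that $\omega_t$ is nondegenerate for $|t|$ small. On the invariant complex the bivector $\omega_t^{-1}$, hence $\Lambda_t$ and $d^{\Lambda_t}$, depend rationally on $t$, so all the operators involved are explicit matrices with entries rational in $t$; their ranks are locally constant off a finite set of parameters, and the whole point of the example is that $t=0$ is one such exceptional value.

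I would then establish the three items. For item 3, given a closed invariant $3$-form $\gamma$ I would use the form-level Lefschetz decomposition $\gamma=\beta_0+\omega_t\wedge\alpha$ with $\beta_0$ an invariant primitive $3$-form and $\alpha$ an invariant $1$-form, and ask when $\beta_0$ and $\omega_t\wedge\alpha$ can be chosen individually $d$-closed. For $t\neq 0$ a dimension count on $\Lambda^\bullet\g^*$ should give $\dim H^{(0,3)}_{\omega_t}+\dim H^{(1,1)}_{\omega_t}=6=b_3$ and $H^{(0,3)}_{\omega_t}\cap H^{(1,1)}_{\omega_t}=\{0\}$, hence the asserted direct sum; for $t=0$ I would exhibit an explicit $d$-closed invariant $3$-form whose class lies in neither summand (the cancellation $d\beta_0=\omega\wedge d\alpha$ between the two pieces of every Lefschetz decomposition being unavoidable) and an explicit nonzero class in $H^{(0,3)}_{\omega}\cap H^{(1,1)}_{\omega}$. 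Item 2 then follows immediately, since $\dim H^{(0,3)}_{\omega_t}$ and $\dim H^{(1,1)}_{\omega_t}$ cannot both remain constant across $t=0$. For item 1 I would compute $h^3_{d+d^{\Lambda_t}}=\dim\bigl(\ker(d+d^{\Lambda_t})\cap\Lambda^3\g^*\bigr)-\dim\bigl(\Imm dd^{\Lambda_t}\cap\Lambda^3\g^*\bigr)$ directly (cross-checking against $h^k_{d+d^{\Lambda_t}}=b_k+\frac12\Delta^k_s(\omega_t)$) and verify $h^3_{d+d^{\Lambda_0}}>h^3_{d+d^{\Lambda_t}}$ for $t\neq 0$ small, compatibly with the upper semicontinuity of Proposition~\ref{symplectic-semicontinuity}.

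The step I expect to be the main obstacle is the explicit determination of $h^\bullet_{d+d^{\Lambda_t}}$ (and, if used, $h^\bullet_{dd^{\Lambda_t}}$) as a function of $t$: this amounts to row-reducing the $t$-parametrised matrices of $d^{\Lambda_t}$, $dd^{\Lambda_t}$ and $d+d^{\Lambda_t}$ on $\Lambda^\bullet\g^*$ and separating the exceptional rank at $t=0$ from the generic one, a lengthy but routine computation best organised degree by degree. By contrast, the Lefschetz statements in item 3 and the jump in item 2 reduce to a finite explicit check once the invariant representatives are fixed, and the essentiality of the deformation is immediate from Moser's lemma.
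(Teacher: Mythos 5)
Your overall strategy (reduce everything to invariant forms via Nomizu and the Angella--Kasuya reduction for the symplectic Bott--Chern groups, deform linearly by $\omega_t=\omega+t\theta$ with $\theta$ an invariant closed $2$-form, then compare ranks at $t=0$ and $t\neq 0$) is the same as the paper's, but as written the proposal has a genuine gap: the statement is an existence statement, and its entire content is the exhibition of a \emph{specific} direction $\theta$ together with the verification that the dimensions actually change. You never choose $\theta$ (only require that the deformation be essential) and you defer all the decisive computations, which you yourself flag as ``the main obstacle''. Essentiality is necessary but not sufficient: upper semicontinuity (Proposition \ref{symplectic-semicontinuity}) only forbids the dimensions from increasing for small $t\neq 0$, it does not force them to drop, and for a badly chosen essential $\theta$ they may well stay constant, so the heuristic that ``$t=0$ is an exceptional rank value'' is exactly what must be proved and depends on the choice of $\theta$. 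The paper takes $\theta=e^{26}-e^{45}$ (a harmonic representative appearing in its explicit basis of $H^2_{d+d^\Lambda}(X)$), builds a $t$-dependent adapted coframe $E^\bullet_t$ in which $\omega_t=E^{12}_t+E^{34}_t+E^{56}_t$, and verifies by hand that $h^2_{d+d^{\Lambda_t}}$ drops from $8$ to $6$ (note the drop is located in degree $2$, not degree $3$ as you plan), and that $h^{(1,1)}_{\omega_t}$ jumps from $2$ to $3$ while $h^{(0,3)}_{\omega_t}$ drops from $4$ to $3$, using $H^{(1,1)}_{\omega_t}=L_tH^1_{dR}(X;\R)$ and explicit bases.

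There is also a logical slip in your derivation of item 2: it does \emph{not} follow ``immediately'' from item 3 that the dimensions jump. Item 3 at $t=0$ only says that the sum is proper and the intersection nonzero; this is perfectly compatible with both dimensions being constantly equal to $3$ (giving a $5$-dimensional sum with $1$-dimensional intersection at $t=0$). So the jump in $h^{(0,3)}_{\omega_t}$ and $h^{(1,1)}_{\omega_t}$ must be established by the explicit computation of these dimensions at $t=0$ and $t\neq 0$, as the paper does; in fact in the paper the logical order is the reverse of yours, with item 3 deduced from the explicit bases computed for item 2.
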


\begin{proof}
$1.$ According to
\cite{angella-kasuya-symplectic} we can compute by using invariant forms the symplectic cohomology groups of $(X\,,\omega)$
and 
they turn out to be $h^1_{d+d^\Lambda}=3$,
$h^2_{d+d^\Lambda}=8$ and $h^3_{d+d^\Lambda}=13$.
More explicitly, the second symplectic Bott-Chern cohomology group is
$$
H^2_{d+d^\Lambda}(X)=\left\langle e^{12}, e^{13}, e^{14}, e^{15}, e^{23}, e^{24}, e^{16}+e^{25}-e^{34}, e^{26}-e^{45}\right\rangle\,.
$$
Now we consider
the deformation of the symplectic structure given by
$$
\omega_t=\omega+t\left(e^{26}-e^{45}\right)=
E^{12}_t+E^{34}_t+E^{56}_t\,,
$$
where we set
$$
\left\lbrace
\begin{array}{lcl}
E^1_t & =& e^1+te^2\\
E^2_t &=& e^6\\
E^3_t &=& e^2-te^4\\
E^4_t &=& e^5\\
E^5_t &=& -e^3\\
E^6_t &=& e^4 
\end{array}
\right.\,.
$$
We consider, for any $t$, the $\omega_t$-calibrated almost-complex structure $J_t$ such that
$J_t^*E^1_t=-E^2_t$, $J_t^*E^3_t=-E^4_t$, $J_t^*E^5_t=-E^6_t$; then the
structure equations are
$$
\left\lbrace
\begin{array}{lcl}
dE^1_t & =& 0\\
dE^2_t &=& E^{14}_t-tE^{34}_t+t^2E^{46}_t-E^{35}_t+tE^{56}_t+E^{36}_t\\
dE^3_t &=& -tE^{13}_t-t^2E^{16}_t\\
dE^4_t &=& E^{16}_t-tE^{36}_t\\
dE^5_t &=& 0\\
dE^6_t &=& E^{13}_t+tE^{16}_t
\end{array}
\right..
$$
We want to show that for $t\neq 0$ $h^2_{d+d^{\Lambda_t}}<8$ so
we compute the $\Delta_{d+d^{\Lambda_t}}$-harmonic $2$-forms.
An arbitrary left-invariant $2$-form $\alpha$ on $X$ can be written as
$$
\alpha=A_{12}E^{12}_t+A_{13}E^{13}_t+A_{14}E^{14}_t+
A_{15}E^{15}_t+A_{16}E^{16}_t+
A_{23}E^{23}_t+A_{24}E^{24}_t+A_{25}E^{25}_t+A_{26}E^{26}_t+
$$
$$
+A_{34}E^{34}_t+A_{35}E^{35}_t+A_{36}E^{36}_t+
A_{45}E^{45}_t+A_{46}E^{46}_t+A_{56}E^{56}_t
$$
and by a straightforward computation using the structure
equations we get that $\alpha$ is $d$-closed and
$d^{\Lambda_t}$-closed if and only if
$$
\alpha=A_{35}\left(tE^{12}_t+tE^{34}_t+E^{35}_t\right)+A_{56}\left(E^{12}_t+E^{34}_t+E^{56}_t\right)+A_{13}E^{13}_t+
A_{14}\left(E^{14}_t-tE^{34}_t+t^2E^{46}_t\right)+
$$
$$
+A_{15}E^{15}_t+A_{16}E^{16}_t+A_{23}\left(E^{23}_t+tE^{26}_t-E^{46}_t\right)+A_{36}E^{36}_t.
$$
Such a form $\alpha$ satisfies $\left(dd^{\Lambda_t}\right)^*\alpha=0$ if
and only if
$$
\alpha=A_{35}\left(tE^{12}_t+tE^{34}_t+E^{35}_t\right)+A_{56}\left(E^{12}_t+E^{34}_t+E^{56}_t\right)+
A_{14}\left(E^{14}_t-tE^{34}_t+t^2E^{46}_t\right)+
$$
$$
+A_{15}E^{15}_t+A_{23}\left(E^{23}_t+tE^{26}_t-E^{46}_t\right)+A_{36}\left(-t^3E^{13}_t+tE^{16}_t+E^{36}_t\right).
$$
In particular by \cite{angella-kasuya-symplectic} and \cite{TY} this computation suffices to conclude that for $t\neq 0$ $h^2_{d+d^{\Lambda_t}}=6$.

$2.$ Now we want to show that also
the numbers 
$$
h^{(r,s)}_\omega\,:=\,\dim H^{(r,s)}_\omega(X\,;\,\mathbb{R})
$$
are not invariant along a symplectic deformation.

Notice that by \cite[Proposition 3.3]{angella-tomassini-symplectic} all the cohomology groups can be computed by invariant forms.
By \cite[Proposition 2.8]{angella-tomassini-symplectic} we have that 
$$
H^{(1,1)}_{\omega_t}(X\,;\,\mathbb{R})
=LH^{(0,1)}_{\omega_t}(X\,;\,\mathbb{R})=LH^1_{dR}(X\,;\,\mathbb{R})\,.
$$
Let $\alpha=Ae^1+Be^2+Ce^3$ be a $d$-closed $1$-form, with $A,B,C\in\mathbb{R}$; then
$$
\omega_t\wedge\alpha=A\left(e^{125}-e^{134}+te^{126}-te^{145}\right)+
B\left(-e^{126}-e^{234}-te^{245}\right)+
$$
$$
+C(-e^{136}-e^{235}-te^{236}
-te^{345}).
$$
Since $e^{125}$ and $e^{134}$ are $d$-exact,
one gets that for $t=0$
$$
h^{(1,1)}_\omega=2
$$
but, for $t\neq 0$
$$
h^{(1,1)}_{\omega_t}=3.
$$
More precisely, for $t\neq 0$
$$
H^{(1,1)}_{\omega_t}(X\,;\,\mathbb{R})=\mathbb{R}\left\langle
e^{126}-e^{145},e^{126}+e^{234}+te^{245},e^{136}+e^{235}+t(e^{236}+e^{345})
\right\rangle
$$
and for $t=0$,
$$
H^{(1,1)}_{\omega}(X\,;\,\mathbb{R})=\mathbb{R}\left\langle
e^{126}+e^{234},e^{136}+e^{235}
\right\rangle\,.
$$
Similarly, one can compute $H^{(0,3)}_{\omega_t}(X\,;\mathbb{R})$ obtaining
$h^{(0,3)}_{\omega_t}=3$ for $t\neq 0$ and\\
$h^{(0,3)}_{\omega}=4$.
More precisely, by linear computations one has, for $t\neq 0$
$$
H^{(0,3)}_{\omega_t}(X\,;\mathbb{R})\,=\,\mathbb{R}\left\langle
-(e^{136}+e^{235})+t(e^{345}+e^{236})-\frac{2}{3}t(-e^{156}+
e^{345}+2e^{236}+e^{246}),\right.
$$
$$
\left.
(e^{126}+e^{234})-te^{245},(e^{135}+e^{145})+te^{245}
\right\rangle
$$
and
$$
H^{(0,3)}_{\omega}(X\,;\mathbb{R})\,=\,\mathbb{R}\left\langle
e^{136}+e^{235},-e^{156}+
e^{345}+2e^{236}+e^{246},
e^{126}+e^{234},e^{135}+e^{145}\right\rangle\,.
$$

$3.$ By the above computations one gets that, for $t=0$ we have
$$
H^{(0,3)}_{\omega}(X\,;\mathbb{R})+
H^{(1,1)}_{\omega}(X\,;\mathbb{R})\subsetneq
H^3_{dR}(X\,;\,\mathbb{R})
$$
and
$$
H^{(0,3)}_{\omega}(X\,;\mathbb{R})\cap
H^{(1,1)}_{\omega}(X\,;\mathbb{R})\neq \left\lbrace 0\right\rbrace\,.
$$
But, if $t\neq 0$ we have the following decomposition
$$
H^{(0,3)}_{\omega_t}(X\,;\mathbb{R})\oplus
H^{(1,1)}_{\omega_t}(X\,;\mathbb{R})\,=\,
H^3_{dR}(X\,;\,\mathbb{R})\,.
$$
\end{proof}

\begin{cor}
Let $(X,\omega)$ be a compact symplectic manifold not satisfying the Hard-Lefschetz Condition. Then the decomposition 
$$
H^\bullet_{dR}(X\,;\,\mathbb{R})=\bigoplus_{r\in\mathbb{N}}
H^{(r,\bullet-2r)}_\omega(X\,;\,\mathbb{R})
$$
is not closed under symplectic deformations.
\end{cor}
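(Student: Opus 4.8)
The plan is to derive the Corollary from the preceding Proposition, reading the assertion that ``(\ref{omega-decomposition}) is not closed under symplectic deformations'' as the statement that the truth value of the Lefschetz decomposition in cohomology is not a symplectic deformation invariant, and that the failure of HLC is precisely what leaves room for this.

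First I would observe that the nilmanifold $X=\Gamma\backslash G$ with structure equations $(0,0,0,12,14,15+23+24)$ and $\omega=e^{16}+e^{25}-e^{34}$ is a compact symplectic manifold \emph{not} satisfying the Hard-Lefschetz condition. Indeed, the computation already performed in the proof of the Proposition gives $h^2_{d+d^\Lambda}(X)=8$, whereas a direct computation with invariant forms gives $b_2(X)=5$; since $b_2(X)<h^2_{d+d^\Lambda}(X)$, the characterization of HLC recalled in Section~\ref{preliminaries} forbids HLC (equivalently, a non-toral nilmanifold never carries an HLC symplectic form).

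Second I would invoke part~3 of the Proposition. Along the symplectic deformation $\omega_t=\omega+t(e^{26}-e^{45})$ --- a genuine deformation, since $[e^{26}-e^{45}]\neq 0$ in $H^2_{dR}(X;\R)$ --- the degree-$3$ instance of~(\ref{omega-decomposition}), namely $H^3_{dR}(X;\R)=H^{(0,3)}_{\omega_t}(X;\R)\oplus H^{(1,1)}_{\omega_t}(X;\R)$, holds for every $t\neq 0$ and fails at $t=0$, where the sum is neither direct nor exhaustive. As~(\ref{omega-decomposition}) is, degree by degree, exactly the conjunction of such statements, its truth value is not constant along $\{\omega_t\}_t$, which is the assertion of the Corollary. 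If one insists on the stronger reading in which~(\ref{omega-decomposition}) should hold in \emph{all} degrees for some $t\neq0$, it remains to note that on the $6$-manifold $X$ the other degrees are forced: degrees $0,1$ are trivial, degree $2$ holds on any compact symplectic manifold by \cite[Theorem~2.6]{angella-tomassini-symplectic}, and in degrees $4,5,6$ the primitive spaces $P^s$ with $s>3$ vanish, so the decomposition collapses to a single power of $L$ and reduces to the bijectivity of the corresponding Lefschetz maps, which one checks directly for $t\neq0$.

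The step that carries all the weight is the one we are allowed to assume, namely the explicit identification in the Proposition of $H^{(0,3)}_{\omega_t}(X;\R)$ and $H^{(1,1)}_{\omega_t}(X;\R)$ and of their dimensions; downstream of that there is essentially nothing left to prove. The only genuine subtlety is the universal phrasing of the Corollary against a single witnessing family: I would address it by emphasizing that HLC is exactly what \emph{forces}~(\ref{omega-decomposition}) to persist --- by \cite[Corollary~2.5]{angella-tomassini-symplectic} the decomposition holds identically on the Hard-Lefschetz locus --- so that outside that locus nothing constrains the decomposition, and the Proposition shows that this freedom is genuinely used.
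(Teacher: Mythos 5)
Your argument is essentially the paper's own: the Corollary is stated without a separate proof precisely because it is an immediate consequence of the preceding Proposition, whose part~3 exhibits the family $\omega_t=\omega+t(e^{26}-e^{45})$ for which the degree-$3$ Lefschetz decomposition in cohomology holds for every $t\neq 0$ and fails (neither direct nor exhaustive) at $t=0$; your additional check that $(X,\omega)$ does not satisfy HLC (e.g.\ via $b_2=5<8=h^2_{d+d^\Lambda}$, or simply because HLC would force the decomposition by \cite[Corollary 2.5]{angella-tomassini-symplectic}, contradicting part~3) is correct and matches the intended reading. One caution about your optional ``stronger reading'' paragraph: it is not accurate that in degrees $4,5,6$ the decomposition collapses to a single power of $L$ --- on a $6$-manifold one has $H^4=H^{(1,2)}_\omega\oplus H^{(2,0)}_\omega$ and $H^5=H^{(1,3)}_\omega\oplus H^{(2,1)}_\omega$, so two summands survive, and the claim that these decompositions hold for $t\neq 0$ is not ``forced'' but would require a computation that neither you nor the Proposition carries out. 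This does not affect the Corollary as intended (the degree-$3$ phenomenon already shows the decomposition property is not closed under symplectic deformations), but you should not present the all-degrees statement for $t\neq 0$ as established.
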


\section{Non-HLC degrees in dimension $4$}

In this Section we give a more explicit description of \cite[Theorem 4.5]{tardini-tomassini-symplectic}. As already mentioned in the Preliminaries, in \cite{angella-tomassini-algebraic} D. Angella and the second-named author, inspired by the analogue results in complex geometry,
introduce and study on a compact symplectic manifold $(X^{2n},\omega)$
the following non-negative integers (cf. \cite{tardini-tomassini-symplectic})
\[
\Delta^k_s:= h^k_{d+d^\Lambda}-b_k,
\qquad k\in\mathbb{Z},
\]
proving that their triviality characterizes the Hard-Lefschetz Condition.
In this sense these numbers measure the HLC-degree of a symplectic manifold,
as their analogue in the complex case do (cf. \cite{angella-tomassini-inventiones}).\\
Since by \cite[Theorem 4.3]{tardini-tomassini-symplectic} the number $\Delta^1_s$ is $0$ on every compact symplectic manifold then, in dimension $4$, the vanishing of $\Delta^2_s$ characterizes the Hard-Lefschetz Condition.
We will describe this more precisely showing that $\Delta^2_s$ is the dimension of a vector subspace of the space of de Rham harmonic forms.
Recall that an almost-complex structure $J$ on a smooth manifold $X^{2n}$ acts in a natural way on the space of $k$-forms as
$$
J\alpha(v_1,\cdots,v_k):=\alpha(Jv_1,\cdots,Jv_k),
$$
with $v_1,\cdots,v_k$ vector fields on $X$. In particular, on $2$-forms $J$ acts as an involution, namely if $\alpha\in\Omega^2(X)$ then
$$
J\alpha(\cdot,\cdot\cdot)=\alpha(J\cdot,J\cdot\cdot)\quad\text{and}\quad
J^2=\text{Id},
$$
leading to a decomposition of $\Omega^2(X)$ in the $1-$ and $(-1)$-eigenspaces of $J$:
$$
\Omega^2(X)=\Omega_J^+(X)\oplus\Omega_J^-(X),
$$
where $\Omega^{\pm}_J(X):=\left\lbrace\alpha\in\Omega^2(X)\,\mid\,J\alpha=\pm\alpha\right\rbrace$,
as well as a splitting of the corresponding vector bundles
$$
\Lambda^2(X)=\Lambda_J^+(X)\oplus\Lambda_J^-(X)\,.
$$
The elements in $\Omega_J^+(X)$ (resp. $\Omega_J^-(X)$) are called \emph{$J$-invariant forms} (resp. \emph{$J$-anti-invariant forms}).\\
In \cite{LZ} T.-J. Li and W. Zhang studied when this decomposition passes in cohomology, in particular they define the \emph{$J$-invariant} and
\emph{$J$-anti-invariant subgroups} of the second de Rham cohomology group of $X$, $H^2_{dR}(X,\mathbb{R})$, taking the classes which admit a representative in $\Omega_J^+(X)$, respectively in $\Omega_J^-(X)$, i.e.,
$$
H_J^+(X)_{\mathbb{R}}:=\left\lbrace[\alpha]\in H^2_{dR}(X,\mathbb{R})\,\mid\,
J\alpha=\alpha\right\rbrace,
$$
$$
H_J^-(X)_{\mathbb{R}}:=\left\lbrace[\alpha]\in H^2_{dR}(X,\mathbb{R})\,\mid\,
J\alpha=-\alpha\right\rbrace.
$$
Clearly, we have the inclusion
$$
H_J^+(X)_{\mathbb{R}}+H_J^-(X)_{\mathbb{R}}\subseteq
H^2_{dR}(X,\mathbb{R})
$$
but the sum can be neither direct nor equal to $H^2_{dR}(X,\mathbb{R})$.
Therefore we recall the following definition
\begin{definition}[{\cite[Definitions 2.2, 2.3, Lemma 2.2]{LZ}}]
An almost-complex structure $J$ on a differentiable manifold $X$ is called
\begin{itemize}
\item[-] \emph{$\mathcal{C}^\infty$-pure} if 
\[
H_J^+(X)_{\mathbb{R}}\cap H_J^-(X)_{\mathbb{R}}=\left\lbrace 0\right\rbrace,
\]
\item[-]  \emph{$\mathcal{C}^\infty$-full} if
\[
H^2_{dR}(X;\mathbb{R})= H_J^+(X)_{\mathbb{R}}+H_J^-(X)_{\mathbb{R}},
\]
\item[-] \emph{$\mathcal{C}^\infty$-pure and full} if it is $\mathcal{C}^\infty$-pure and
$\mathcal{C}^\infty$-full, i.e., if
\[
H^2_{dR}(X;\mathbb{R})=H_J^+(X)_{\mathbb{R}}\oplus H_J^-(X)_{\mathbb{R}}.
\]
\end{itemize}
\end{definition}
Examples of $\mathcal{C}^\infty$-pure and full structures are K\"ahler structures.
More in general, Draghici, Li and Zhang prove that any
almost-complex structure on a compact $4$-manifold is $\mathcal{C}^\infty$-pure and full (see \cite[Theorem 2.3]{DLZ}). A similar result does not hold in higher dimensions, indeed in \cite{FT} a $6$-dimensional non-$\mathcal{C}^{\infty}$-pure almost-complex manifold is constructed.\\
Let $(X^4,\omega)$ be a compact symplectic manifold of dimension $4$ and let $J$ be a compatible almost-complex structure and $g$ the associated Riemannian metric, then also the Hodge operator $*_g$ associated to $g$ acts as an involution on $\Lambda^2(X)$ giving the splittings
$$
\Lambda^2(X)=\Lambda_g^+\oplus\Lambda_g^-\,,\qquad
\Omega^2(X)=\Omega_g^+\oplus\Omega_g^-
$$
into \emph{selfdual} and \emph{anti-selfdual} $2$-forms. Moreover,
$$
\mathcal{H}^2_{dR}(X)=\mathcal{H}^+_{g}\oplus\mathcal{H}^-_{g}
$$
where $\mathcal{H}^2_{dR}(X)$ denotes the space of de Rham harmonic forms and $\mathcal{H}^{\pm}_{g}:=\Lambda_g^{\pm}\cap \Ker d$.
The two splittings are related as follows
$$
\Lambda_g^+=\mathbb{R}\omega\oplus\Lambda_J^-\,,\qquad
\Lambda_J^+=\mathbb{R}\omega\oplus\Lambda_g^-\,.
$$
Now we prove the following
\begin{theorem}\label{thm:dr-included-bc-new}
Let $(X^{2n}, J, g, \omega)$ be a compact almost-K\"ahler manifold and suppose that $J$ is $\mathcal{C}^\infty$-pure and full. Then
$$
\mathcal{H}^2_{dR}(X)\subseteq \mathcal{H}^2_{d+d^\Lambda}(X)\,.
$$
In particular, $b_2(X)\leq h^2_{d+d^\Lambda}(X)$.
\end{theorem}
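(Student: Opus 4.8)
The plan is to reduce the inclusion to a single scalar assertion and then to prove that assertion by decomposing a de Rham harmonic $2$-form along the $J$-eigenspaces, which is where the $\mathcal{C}^\infty$-pure-and-full hypothesis will enter.

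\emph{Step 1 (reduction).} On a compact manifold $\mathcal{H}^2_{d+d^\Lambda}(X)=\ker d\cap\ker d^\Lambda\cap\ker(dd^\Lambda)^{*}$; since $(dd^\Lambda)^{*}=d^{\Lambda*}d^{*}$ we have $\ker d^{*}\subseteq\ker(dd^\Lambda)^{*}$, so a de Rham harmonic form automatically lies in the first and third kernels, and $d^\Lambda=[d,\Lambda]$ gives $d^\Lambda\alpha=d(\Lambda\alpha)$ whenever $d\alpha=0$. Hence a de Rham harmonic $2$-form $\alpha$ lies in $\mathcal{H}^2_{d+d^\Lambda}(X)$ exactly when the function $\Lambda\alpha$ is constant. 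Using the Lefschetz decomposition $\alpha=\tfrac1n(\Lambda\alpha)\,\omega+\alpha_{0}$ with $\alpha_{0}$ primitive, and noting that $\omega$ is symplectic-Bott-Chern harmonic (it is de Rham harmonic on an almost-Kähler manifold and $d^\Lambda\omega=d(n)=0$) while every primitive de Rham harmonic $2$-form is symplectic-Bott-Chern harmonic, the whole statement reduces to showing that $\Lambda\alpha$ is constant for every $\alpha\in\mathcal{H}^2_{dR}(X)$.

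\emph{Step 2 (toolkit).} I will use the Weil identities on the compatible triple: for a primitive $2$-form $\beta$, $*\beta=\mp\tfrac1{(n-2)!}L^{n-2}\beta$ with sign $-$ if $\beta\in\Omega_J^{+}(X)$ and $+$ if $\beta\in\Omega_J^{-}(X)$; for a $1$-form $\eta$, $*(J\eta)=\tfrac1{(n-1)!}\,\omega^{n-1}\wedge\eta$; $[\Lambda,L]=(n-k)\,\mathrm{id}$ on $k$-forms; and $L^{n-k}\colon\Lambda^{k}\to\Lambda^{2n-k}$ is a pointwise isomorphism, so $L\colon\Lambda^{1}\to\Lambda^{3}$ and $L^{n-2}\colon\Lambda^{3}\to\Lambda^{2n-1}$ are injective. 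Because $d\omega=0$, $L$ commutes with $d$, so a $d$-closed primitive $2$-form is automatically $d$-coclosed, hence de Rham harmonic; in particular $\mathcal{H}_J^{-}:=\Omega_J^{-}(X)\cap\ker d$ equals $\mathcal{H}^2_{dR}(X)\cap\Omega_J^{-}(X)$ and maps isomorphically onto $H_J^{-}(X)_{\mathbb R}$. Finally, $\Lambda$ vanishes on $\Omega_J^{-}(X)$, so the $J$-anti-invariant part of a form never contributes to $\Lambda$.

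\emph{Step 3 (the $J$-invariant harmonic case).} If $\beta\in\mathcal{H}^2_{dR}(X)\cap\Omega_J^{+}(X)$, write $\beta=u\,\omega+\gamma$ with $u=\tfrac1n\Lambda\beta$ and $\gamma$ primitive and $J$-invariant; from $d\beta=0$ one gets $d\gamma=-du\wedge\omega$, and computing $d^{*}\beta=-*d*\beta$ with $*(u\omega)=u\,\tfrac1{(n-1)!}\omega^{n-1}$, $*\gamma=-\tfrac1{(n-2)!}L^{n-2}\gamma$ and the identities of Step 2 gives $d^{*}\beta=n\,J(du)$. Since $\beta$ is coclosed and $J$ is invertible on $1$-forms, $du=0$, so $\Lambda\beta$ is constant.

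\emph{Step 4 (assembly and the main obstacle).} Given $\alpha\in\mathcal{H}^2_{dR}(X)$, $\mathcal{C}^\infty$-pure-and-fullness writes $[\alpha]=[\alpha_1]+[\alpha_2]$ uniquely with $\alpha_1\in\Omega_J^{+}(X)\cap\ker d$ and $\alpha_2\in\Omega_J^{-}(X)\cap\ker d$; by Step 2, $\alpha_2$ is de Rham harmonic, so $\beta:=\alpha-\alpha_2$ is a de Rham harmonic representative of a class in $H_J^{+}(X)_{\mathbb R}$ and $\Lambda\alpha=\Lambda\beta$ ($\alpha_2$ being primitive). The proof finishes once $\beta$ is shown to be $J$-invariant, for then Step 3 applies. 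This is the crux, and the place where the hypothesis is really needed. Writing $\beta=\beta^{+}+\beta^{-}$ for the pointwise $J$-eigenspace decomposition, the computation of Step 3 — now keeping $d\beta^{+}$, and using $*\beta^{-}=\tfrac1{(n-2)!}L^{n-2}\beta^{-}$ with the injectivity of $L^{n-2}$ on $3$-forms — forces, via $d^{*}\beta=0$, that $d\beta^{+}=\tfrac{n}{2(n-1)}\,du\wedge\omega$ with $u=\tfrac1n\Lambda\beta$; one must then deduce $du=0$, equivalently $d\beta^{-}=0$, whence $\beta^{-}\in\mathcal{H}_J^{-}$ and $\|\beta^{-}\|_{L^{2}}^{2}=\langle\beta,\beta^{-}\rangle_{L^{2}}=0$, using that de Rham harmonic representatives of $H_J^{+}(X)_{\mathbb R}$-classes are $L^{2}$-orthogonal to $\mathcal{H}_J^{-}$ — an orthogonality one gets from pure-and-fullness through the dimension count $\dim\mathcal{H}^2_{dR}(X)=b_2(X)=h_J^{+}+h_J^{-}$ together with the pointwise vanishing $\beta^{+}\wedge\beta^{-}\wedge\omega^{n-2}=0$ (valid since $\beta^{+}\wedge\beta^{-}$ is a $J$-anti-invariant $4$-form while every $2n$-form is $J$-invariant). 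The delicate point, which I expect to be the real content of the proof, is extracting $du=0$ from these ingredients: it forces one to run the Weil-identity computations of Steps 2–3 carefully on the pointwise $J$-splitting in tandem with the pure-and-full dimension count, and it is the only step that is not a routine verification.
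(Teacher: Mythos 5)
Your Step 1 reduction (a de Rham harmonic $2$-form lies in $\mathcal{H}^2_{d+d^\Lambda}(X)$ if and only if $\Lambda\alpha$ is constant), your use of the Weil identity for $J$-anti-invariant forms, and the Step 3 computation showing that a pointwise $J$-invariant harmonic $2$-form has constant $\Lambda$-trace are all correct; so is the refined identity $d\beta^{+}=\tfrac{n}{2(n-1)}\,du\wedge\omega$ for a general harmonic $\beta$. But the theorem is not proved: the single step where the $\mathcal{C}^\infty$-pure-and-full hypothesis must actually do work --- deducing $du=0$, equivalently $d\beta^{-}=0$, for the harmonic representative $\beta$ of a class in $H_J^{+}(X)_{\mathbb{R}}$ --- is exactly the step you declare to be ``the delicate point'' and leave open. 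The route you sketch for it is circular: the orthogonality argument ($\beta^{-}\in\mathcal{H}_J^{-}$, hence $\|\beta^{-}\|^{2}_{L^{2}}=\langle\beta,\beta^{-}\rangle_{L^{2}}=0$) becomes available only \emph{after} $d\beta^{-}=0$ is known, which is precisely what has to be shown; and once $du=0$ is known the theorem is already finished, so that argument adds nothing. Thus Step 4, which carries the entire content of the statement, is missing.

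For comparison, the paper does not pass through the constancy of $\Lambda\alpha$: writing $\alpha=\gamma_J^{+}+\gamma_J^{-}$ pointwise, it reduces (via $d\ast\gamma_J^{-}=\tfrac{1}{(n-2)!}\,d\gamma_J^{-}\wedge\omega^{n-2}$, the identity from Hind--Medori--Tomassini) to showing $d\gamma_J^{-}=0$, and then closes the argument using fullness together with the bijectivity of $Z_J^{-}(X)\to H_J^{-}(X)$: it writes $\alpha=\alpha_J^{+}+(d\beta)_J^{+}+\alpha_J^{-}$ with $\alpha_J^{\pm}$ closed of pure type and the exact discrepancy $J$-invariant, so that uniqueness of the pointwise splitting gives $\gamma_J^{-}=\alpha_J^{-}$, hence $d\gamma_J^{-}=0$. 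Your missing step is exactly equivalent to this assertion (that the pointwise anti-invariant part of the harmonic form coincides with the closed anti-invariant representative of its $H_J^{-}$-component), and nothing in your proposal supplies it. One further toolkit error: your blanket claim that every $d$-closed primitive $2$-form is $d$-coclosed is false --- on the primary Kodaira surface of the paper's first example, $e^{23}$ is closed and primitive, yet $d\ast e^{23}=d e^{14}=-e^{123}\neq 0$; the statement holds only for $J$-anti-invariant (or, with the opposite sign, $J$-invariant primitive) forms via the Weil identity, which fortunately is the only case your argument actually needs.
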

Recall that by \cite[Proposition 3.2]{FT} on a compact almost-K\"ahler manifold $J$ is always $\mathcal{C}^\infty$-pure.\\
\begin{proof}
Let $\alpha\in\mathcal{H}^2_{dR}(X)$, namely $d\alpha=0$ and $d*\alpha=0$. We just need to show that $d^\Lambda\alpha=0$ since, clearly
$dd^\Lambda*\alpha=-d^\Lambda d*\alpha=0$.
Notice that $d^\Lambda\alpha=0$ if and only if $d*J\alpha=0$.
We consider the unique decomposition $\alpha=\gamma_J^++\gamma_J^-$
where $\gamma_J^+$ is a $J$-invariant form and $\gamma_J^-$ is a $J$-anti-invariant form, hence $J\alpha=\gamma_J^+-\gamma_J^-$.
Therefore, the thesis is equivalent to show that
$$
d*\gamma_J^+=d*\gamma_J^-\,.
$$
By hypothesis,
$$
0=d\alpha=d\gamma_J^++d\gamma_J^-
$$
and
$$
0=d*\alpha=d*\gamma_J^++d*\gamma_J^-
$$
so, we have to prove that $d*\gamma_J^-=0$.
By \cite[Proposition 3.1]{HMT} one has that
$$
d*\gamma_J^-=\frac{1}{(n-2)!}\,d\gamma_J^-\,\wedge\,\omega^{n-2}\,,
$$
hence, if we show that $d\gamma_J^-=0$ we can conclude.\\
By hypothesis, $H^2_{dR}(X)=H_J^+(X)\oplus H_J^-(X)$ and, recall that $Z_J^-(X)\to H_J^-(X)$ is bijective namely, we have a decomposition, with the obvious notations,
$$
\alpha=\alpha_J^++(d\beta)_J^++\alpha_J^-\,.
$$
By uniqueness, $\gamma_J^-=\alpha_J^-$ and now,
$d\gamma_J^-=d\alpha_J^-=0$ concluding the proof.
\end{proof}
Since by \cite{DLZ} on a compact smooth $4$-manifold every almost-complex structure is $\mathcal{C}^\infty$-pure and full one gets the following
\begin{cor}
Let $(X^4,\omega)$ be a compact symplectic manifold. Then,
$$
\mathcal{H}^2_{dR}(X)\subseteq \mathcal{H}^2_{d+d^\Lambda}(X)\,.
$$
In particular, $b_2(X)\leq h^2_{d+d^\Lambda}(X)$.
\end{cor}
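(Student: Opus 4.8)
The plan is to reduce the statement directly to Theorem~\ref{thm:dr-included-bc-new}. First I would fix on $(X^4,\omega)$ a compatible triple $(\omega,J,g)$: an $\omega$-calibrated almost-complex structure $J$ always exists (the space of such $J$ on a symplectic manifold is non-empty and contractible), and $g(\cdot,\cdot):=\omega(\cdot,J\cdot)$ is the associated Riemannian metric, so $(X^4,J,g,\omega)$ is a compact almost-K\"ahler manifold to which the theorem could in principle apply.

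Next I would verify the hypothesis of the theorem, namely that $J$ is $\mathcal{C}^\infty$-pure and full. This is precisely the content of \cite[Theorem 2.3]{DLZ}: every almost-complex structure on a compact smooth $4$-manifold is $\mathcal{C}^\infty$-pure and full. (One could alternatively recall that $\mathcal{C}^\infty$-pureness on a compact almost-K\"ahler manifold is automatic by \cite[Proposition 3.2]{FT}, but in real dimension $4$ the stronger statement is available outright and suffices.) Applying Theorem~\ref{thm:dr-included-bc-new} then gives $\mathcal{H}^2_{dR}(X)\subseteq\mathcal{H}^2_{d+d^\Lambda}(X)$, where both spaces of harmonic forms are computed with respect to the chosen metric $g$.

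For the numerical consequence I would invoke the Hodge-theoretic identifications recalled in Section~\ref{preliminaries}, namely $\mathcal{H}^2_{dR}(X)\simeq H^2_{dR}(X;\mathbb{R})$ and $\mathcal{H}^2_{d+d^\Lambda}(X)\simeq H^2_{d+d^\Lambda}(X)$. Taking dimensions in the inclusion yields $b_2(X)=\dim\mathcal{H}^2_{dR}(X)\leq\dim\mathcal{H}^2_{d+d^\Lambda}(X)=h^2_{d+d^\Lambda}(X)$, which is the asserted inequality.

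I do not expect any genuine obstacle: the corollary is a straightforward specialization of the theorem, the only additional input being the Draghici--Li--Zhang result that renders the $\mathcal{C}^\infty$-pure-and-full hypothesis automatic in dimension $4$. The one point worth a word of care is that the inclusion of harmonic spaces is a priori metric-dependent; however, since the theorem supplies it for the chosen compatible $g$ while both $b_2(X)$ and $h^2_{d+d^\Lambda}(X)$ are metric-independent, the final inequality holds for $(X^4,\omega)$ with no further choices.
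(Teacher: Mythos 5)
Your proposal is correct and follows exactly the paper's own argument: choose an $\omega$-compatible $J$, invoke \cite[Theorem 2.3]{DLZ} to get that $J$ is $\mathcal{C}^\infty$-pure and full on the compact $4$-manifold, and apply Theorem \ref{thm:dr-included-bc-new}. Your extra remarks on the Hodge-theoretic identifications and metric-independence of $b_2$ and $h^2_{d+d^\Lambda}$ are sound but not needed beyond what the paper already records.
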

\begin{proof}
Fix a $\omega$-compatible almost-complex structure $J$ on $X$. Then, by \cite{DLZ} $J$ is $\mathcal{C}^\infty$-pure and full hence we can apply Theorem \ref{thm:dr-included-bc-new} to $(X^4,J,\omega)$.
\end{proof}
\begin{rem}
By definition $\Delta^2_s:=h^2_{d+d^\Lambda}-b_2$ and so on a compact almost-K\"ahler manifold with $J$ $\mathcal{C}^\infty$-pure and full we just proved, with a different technique, that $\Delta_s^2\geq 0$.\\
In particular, if $(X^4,\omega)$ is a compact symplectic manifold one has that
$\Delta_s^2\geq 0$ and, moreover
$$
\mathcal{H}^2_{d+d^\lambda}(X)=\mathcal{H}^2_{dR}(X)
$$
if and only if
$$
b_2= h^2_{d+d^\Lambda}
$$
which is equivalent to the Hard-Lefschetz Condition by \cite{tardini-tomassini-symplectic}.
\end{rem}
Let $(X^{2n}, J, g, \omega)$ be a compact almost-K\"ahler manifold and suppose that $J$ is $\mathcal{C}^\infty$-pure and full.\\
In view of Theorem \ref{thm:dr-included-bc-new} we denote with $V$ the finite dimensional vector subspace of $\mathcal{H}^2_{dR}(X)$ such that
$$
\mathcal{H}^2_{dR}(X)\oplus V= \mathcal{H}^2_{d+d^\lambda}(X)\,.
$$
It follows that
$$
\dim V=h^2_{d+d^\Lambda}-b_2=\Delta^2_s
$$
namely, $\Delta^2_s$ can be seen as the dimension of a vector subspace of the space of harmonic forms $\mathcal{H}^2_{dR}(X)$.
Before discussing who $V$ is, we see a couple of examples.

\begin{ex}
Let $X=\Gamma\backslash G$ be a primary-Kodaira surface with structure equations
\[
\left\{\begin{array}{rcl}
            d e^1 &=&   0 \\[5pt]
            d e^2 &=&   0  \\[5pt]
            d e^3 &=&   0  \\[5pt]
            d e^4 &=&   e^2\wedge e ^3  \\[5pt]
       \end{array}\right. \;.
\]
Let $\omega:=e^1\wedge e^2+e^3\wedge e^4$ be a symplectic structure and set $Je^1=-e^2$, $Je^3=-e^4$. We denote with $g$ the associated Riemannian metric. Then, with the usual notation $e^{ij}:=e^i\wedge e^j$, it is easy to show that
$$
\mathcal{H}^2_{dR}(X)=\left\langle e^{12},e^{13},e^{24},e^{34}\right\rangle=
\left\langle e^{12}+e^{34},e^{13}+e^{24}\right\rangle\oplus
\left\langle e^{12}-e^{34},e^{13}-e^{24}\right\rangle
$$
with
$$
\mathcal{H}^+_{g}=\left\langle e^{12}+e^{34},e^{13}+e^{24}\right\rangle\,,
$$
and
$$
\mathcal{H}^-_{g}=\left\langle e^{12}-e^{34},e^{13}-e^{24}\right\rangle\,.
$$
Similarly, one can compute
$$
\mathcal{H}^2_{d+d^\Lambda}(X)=\left\langle e^{12},e^{13},e^{24},e^{34},e^{23}\right\rangle\,,
$$
in particular
$$
V=\left\langle e^{23}\right\rangle\,.
$$
Notice that $e^{23}$ is $d$-exact.
\end{ex}
\begin{ex}
Let $X=\Gamma\backslash G$ be a surface with structure equations
\[
\left\{\begin{array}{rcl}
            d e^1 &=&   0 \\[5pt]
            d e^2 &=&   0  \\[5pt]
            d e^3 &=&   e^1\wedge e^2  \\[5pt]
            d e^4 &=&   e^1\wedge e ^3  \\[5pt]
       \end{array}\right. \;.
\]
Let $\omega:=e^1\wedge e^4+e^2\wedge e^3$ be a symplectic structure and set $Je^1=-e^4$, $Je^2=-e^3$. We denote with $g$ the associated Riemannian metric. Then, with the usual notation $e^{ij}:=e^i\wedge e^j$, it is easy to show that
$$
\mathcal{H}^2_{dR}(X)=\left\langle e^{14},e^{23}\right\rangle=
\left\langle e^{14}+e^{23}\right\rangle\oplus
\left\langle e^{14}-e^{23}\right\rangle
$$
with
$$
\mathcal{H}^+_{g}=\left\langle e^{14}+e^{23}\right\rangle\,,
$$
and
$$
\mathcal{H}^-_{g}=\left\langle e^{14}-e^{23}\right\rangle\,.
$$
Similarly, one can compute
$$
\mathcal{H}^2_{d+d^\Lambda}(X)=\left\langle e^{12},e^{13},e^{14},e^{23}\right\rangle\,,
$$
in particular
$$
V=\left\langle e^{12},e^{13}\right\rangle\,.
$$
Notice that $e^{12}$ and $e^{13}$ are $d$-exact.
\end{ex}
Now we describe explicitly the space $V$.
\begin{prop}
Let $(X^{2n},\omega,J,g)$ be a compact almost-K\"ahler manifold with $J$ $\mathcal{C}^\infty$-pure and full. Then,
$$
V=\mathcal{H}^2_{d+d^\Lambda}(X)\cap \Imm d\,.
$$
\end{prop}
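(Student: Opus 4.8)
The plan is to prove the internal direct sum decomposition
$$
\mathcal{H}^2_{d+d^\Lambda}(X)=\mathcal{H}^2_{dR}(X)\oplus\left(\mathcal{H}^2_{d+d^\Lambda}(X)\cap\Imm d\right),
$$
so that the canonical complement of $\mathcal{H}^2_{dR}(X)$ inside $\mathcal{H}^2_{d+d^\Lambda}(X)$ is exactly $\mathcal{H}^2_{d+d^\Lambda}(X)\cap\Imm d$, and $V$ may be taken to be this space. Two ingredients are used repeatedly: first, a $2$-form $\alpha$ lies in $\mathcal{H}^2_{d+d^\Lambda}(X)=\ker\Delta_{d+d^\Lambda}$ if and only if $d\alpha=0$, $d^\Lambda\alpha=0$ and $(dd^\Lambda)^*\alpha=0$ --- this follows by pairing $\Delta_{d+d^\Lambda}\alpha$ with $\alpha$ in the explicit expression for $\Delta_{d+d^\Lambda}$ recorded in the Preliminaries, and in particular it tells us that every symplectic-Bott-Chern harmonic form is $d$-closed; second, the inclusion $\mathcal{H}^2_{dR}(X)\subseteq\mathcal{H}^2_{d+d^\Lambda}(X)$ furnished by Theorem \ref{thm:dr-included-bc-new} under the $\mathcal{C}^\infty$-pure-and-full hypothesis.

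First I would verify that the sum is direct, i.e. $\mathcal{H}^2_{dR}(X)\cap\Imm d=\{0\}$: if $\alpha=d\beta$ is de Rham harmonic then $\|\alpha\|^2=\langle d\beta,\alpha\rangle=\langle\beta,d^*\alpha\rangle=0$, hence $\alpha=0$. This is simply uniqueness of harmonic representatives and needs only compactness of $X$. Next I would check that the two subspaces span $\mathcal{H}^2_{d+d^\Lambda}(X)$. Let $\alpha\in\mathcal{H}^2_{d+d^\Lambda}(X)$; then $d\alpha=0$, so in the orthogonal Hodge decomposition $\Omega^2(X)=\mathcal{H}^2_{dR}(X)\oplus\Imm d\oplus\Imm d^*$ the form $\alpha$, being closed, has trivial $\Imm d^*$-component, and thus $\alpha=\alpha_0+d\beta$ with $\alpha_0\in\mathcal{H}^2_{dR}(X)$ and $\beta\in\Omega^1(X)$. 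By Theorem \ref{thm:dr-included-bc-new} we have $\alpha_0\in\mathcal{H}^2_{d+d^\Lambda}(X)$, and since $\mathcal{H}^2_{d+d^\Lambda}(X)$ is a linear subspace it follows that $d\beta=\alpha-\alpha_0\in\mathcal{H}^2_{d+d^\Lambda}(X)$; being also exact, $d\beta\in\mathcal{H}^2_{d+d^\Lambda}(X)\cap\Imm d$. Combining the two steps yields the claimed decomposition, and hence $V=\mathcal{H}^2_{d+d^\Lambda}(X)\cap\Imm d$, consistently with $\dim V=h^2_{d+d^\Lambda}-b_2=\Delta^2_s$.

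I do not expect a genuine obstacle here: the argument is a routine combination of Hodge theory with the already-established inclusion $\mathcal{H}^2_{dR}(X)\subseteq\mathcal{H}^2_{d+d^\Lambda}(X)$. The one point worth stating with care is that a $\Delta_{d+d^\Lambda}$-harmonic $2$-form is genuinely $d$-closed, so that it defines a de Rham class and the Hodge decomposition may be invoked; this is immediate from the shape of $\Delta_{d+d^\Lambda}$. A secondary subtlety is purely a matter of phrasing: $V$ was introduced only as \emph{a} complement of $\mathcal{H}^2_{dR}(X)$ in $\mathcal{H}^2_{d+d^\Lambda}(X)$, and what the above really shows is that $\mathcal{H}^2_{d+d^\Lambda}(X)\cap\Imm d$ \emph{is} such a complement --- so the asserted equality holds for this canonical choice of $V$.
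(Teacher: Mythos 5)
Your proof is correct and takes essentially the same route as the paper: both arguments rest on the facts that every form in $\mathcal{H}^2_{d+d^\Lambda}(X)$ is $d$-closed, hence lies in the Hodge splitting $\Ker d\cap\Omega^2(X)=\mathcal{H}^2_{dR}(X)\oplus\Imm d$, and that $\mathcal{H}^2_{dR}(X)\subseteq\mathcal{H}^2_{d+d^\Lambda}(X)$ by Theorem \ref{thm:dr-included-bc-new}; you merely make the resulting internal splitting $\mathcal{H}^2_{d+d^\Lambda}(X)=\mathcal{H}^2_{dR}(X)\oplus\bigl(\mathcal{H}^2_{d+d^\Lambda}(X)\cap\Imm d\bigr)$ explicit. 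Your closing remark that this exhibits $\mathcal{H}^2_{d+d^\Lambda}(X)\cap\Imm d$ as the (canonical) complement $V$ is exactly the intended reading of the statement.
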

\begin{proof}
By definition of $V$
$$
\mathcal{H}^2_{dR}(X)\oplus V= \mathcal{H}^2_{d+d^\lambda}(X)\,,
$$
and by Hodge theory one has that
$$
\mathcal{H}^2_{d+d^\lambda}(X)\subset
\Lambda^2(X)\cap\Ker d=\mathcal{H}^2_{dR}(X)\oplus d\left(A^1(X)\right)\,.
$$
Hence, one gets
$$
V=\mathcal{H}^2_{d+d^\Lambda}(X)\cap d\left(A^1(X)\right)
$$
concluding the proof.
\end{proof}
Putting together this and \cite[Theorem 4.3]{tardini-tomassini-symplectic} one has the following
\begin{cor}\label{cor:delta2-dimensionofv}
Let $(X^4,\omega)$ be a compact symplectic $4$-manifold and let $g$ be a compatible Riemannian metric. Then, the non-HLC degrees are
$$
\Delta_s^1=0 \quad\text{and}\quad
\Delta_s^2=\dim\left(\mathcal{H}^2_{d+d^\Lambda}(X)\cap \Imm d\right)\,.
$$
Moreover,
$\Delta_s^2=0$ if and only if $V=\left\lbrace 0\right\rbrace$ if and only if the Hard-Lefschetz Condition holds on $(X^4\,,\omega)$.
\end{cor}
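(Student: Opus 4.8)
The plan is to deduce the Corollary by assembling ingredients already available: Theorem~\ref{thm:dr-included-bc-new}, the preceding Proposition identifying the complement $V$, and the two facts recalled from \cite{tardini-tomassini-symplectic}, namely that $\Delta^1_s=0$ on every compact symplectic manifold and that on a compact symplectic $4$-manifold the Hard-Lefschetz Condition is equivalent to $\Delta^2_s=0$.

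First I would fix an $\omega$-compatible almost-complex structure $J$ inducing the given Riemannian metric $g$, so that $(X^4,J,g,\omega)$ is a compact almost-K\"ahler $4$-manifold. By \cite{DLZ} every almost-complex structure on a compact $4$-manifold is $\mathcal{C}^\infty$-pure and full, so Theorem~\ref{thm:dr-included-bc-new} applies and yields the inclusion $\mathcal{H}^2_{dR}(X)\subseteq\mathcal{H}^2_{d+d^\Lambda}(X)$. Hence the subspace $V$ with $\mathcal{H}^2_{dR}(X)\oplus V=\mathcal{H}^2_{d+d^\Lambda}(X)$ is well defined and finite-dimensional, and, using the Hodge-theoretic isomorphism $\mathcal{H}^2_{d+d^\Lambda}(X)\simeq H^2_{d+d^\Lambda}(X)$, one gets $\dim V=h^2_{d+d^\Lambda}-b_2=\Delta^2_s$ directly from the definition of $\Delta^2_s$. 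Next I would invoke the preceding Proposition, which identifies $V=\mathcal{H}^2_{d+d^\Lambda}(X)\cap\Imm d$; combining the last two facts gives $\Delta^2_s=\dim\bigl(\mathcal{H}^2_{d+d^\Lambda}(X)\cap\Imm d\bigr)$, while $\Delta^1_s=0$ is \cite[Theorem~4.3]{tardini-tomassini-symplectic}.

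For the final chain of equivalences, $\Delta^2_s=0$ is tautologically the same as $\dim V=0$, i.e. $V=\{0\}$; and $\Delta^2_s=0$ holds if and only if the Hard-Lefschetz Condition holds, which is exactly the quantitative characterization on compact symplectic $4$-manifolds recalled from \cite{tardini-tomassini-symplectic}. Since every step is either a citation or a repackaging of an already-established statement, I do not foresee a genuine obstacle; the only point worth a remark is that, although $\mathcal{H}^2_{d+d^\Lambda}(X)$, and hence $\mathcal{H}^2_{d+d^\Lambda}(X)\cap\Imm d$, depend on the chosen metric $g$, their common dimension is the metric-independent integer $\Delta^2_s$, so the conclusion does not depend on the choice of compatible metric nor of $J$.
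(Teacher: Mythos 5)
Your proposal is correct and follows essentially the same route as the paper: the paper likewise obtains the corollary by combining the inclusion $\mathcal{H}^2_{dR}(X)\subseteq\mathcal{H}^2_{d+d^\Lambda}(X)$ (via the Dr\v{a}ghici--Li--Zhang result that every almost-complex structure on a compact $4$-manifold is $\mathcal{C}^\infty$-pure and full), the identification $V=\mathcal{H}^2_{d+d^\Lambda}(X)\cap\Imm d$ from the preceding Proposition, and \cite[Theorem 4.3]{tardini-tomassini-symplectic} for $\Delta^1_s=0$ and the equivalence of $\Delta^2_s=0$ with the Hard-Lefschetz Condition. Your closing remark on the metric-independence of $\dim V$ is a harmless and sensible addition.
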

We show now an explicit example of an almost-K\"ahler $6$-dimensional manifold $X$ with $J$ non $\mathcal{C}^\infty$-full but such that
$\mathcal{H}^2_{dR}(X)\subseteq \mathcal{H}^2_{d+d^\lambda}(X)$. Moreover,
$\mathcal{H}^3_{dR}(X)\not\subseteq\mathcal{H}^3_{d+d^\lambda}(X)$.
\begin{ex}
Let $X=\Gamma\backslash G$ be the $6$-dimensional nilmanifold with structure equations
\[
\left\{\begin{array}{rcl}
            d e^1 &=&   0 \\[5pt]
            d e^2 &=&   e^{16}+e^{35}  \\[5pt]
            d e^3 &=&   0  \\[5pt]
            d e^4 &=&   e^{15}-e^{36}\\[5pt]
            d e^5 &=&   0  \\[5pt]
            d e^6 &=&   0  \\[5pt]
       \end{array}\right. \;.
\]
Notice that this is the real manifold underlying the Iwasawa manifold.
Let $\omega=e^{12}+e^{34}+e^{56}$ be a symplectic structure and let $J$ be the compatible almost-complex structure such that $Je^1=-e^2$,
$Je^3=-e^4$ and $Je^5=-e^6$. We denote with $g$ the associated Riemannian metric. The space of de-Rham harmonic forms is
$$
\mathcal{H}_{dR}^2(X)=\mathbb{R}\left\langle
e^{12}+e^{34},e^{13},e^{14}+e^{23},e^{15}+e^{36},e^{16}-e^{35},
e^{25}+e^{46},e^{26}-e^{45},e^{56}
\right\rangle\,.
$$
Notice that $J$ is not $\mathcal{C}^\infty$-full (cf. \cite{ATZ}), indeed
$$
H_J^+(X)=\mathbb{R}\left\langle
[e^{12}+e^{34}],[e^{15}+e^{36}+e^{26}-e^{45}],[e^{16}-e^{35}-e^{25}-e^{46}],[e^{56}]
\right\rangle\,
$$
and
$$
H_J^-(X)=\mathbb{R}\left\langle
[e^{14}+e^{23}],[e^{15}+e^{36}-e^{26}+e^{45}],[e^{16}-e^{35}+e^{25}+e^{46}]
\right\rangle\,
$$
hence $H_J^+(X)\oplus H_J^-(X)\subsetneq H^2_{dR}(X)$.\\
However, thanks to \cite{macri} and \cite{angella-kasuya-symplectic}
we can also compute by using invariant forms the symplectic Bott-Chern cohomology and one has
$$
\mathcal{H}_{d+d^\Lambda}^2(X)=\mathbb{R}\left\langle
e^{12}+e^{34},e^{13},e^{14}+e^{23},e^{15},e^{16},
e^{25}+e^{46},e^{26}-e^{45},e^{35},e^{36},e^{56}
\right\rangle\,,
$$
so $\mathcal{H}_{dR}^2(X)\subseteq \mathcal{H}_{d+d^\Lambda}^2(X)$. Of course, in this case
$$
V=\mathbb{R}\left\langle
e^{15}-e^{36},e^{16}+e^{35}
\right\rangle=\mathcal{H}_{d+d^\Lambda}^2(X)\cap \Imm d.
$$
Moreover, notice that
$\mathcal{H}_{dR}^3(X)\not\subseteq \mathcal{H}_{d+d^\Lambda}^3(X)$,
indeed if we consider the $3$-form $e^{256}$, by a straightforward computation one has $de^{256}=0$ and $d*e^{256}=0$, but
$d^\Lambda e^{256}=-e^{16}-e^{35}\neq 0$.
\end{ex}
\begin{rem}
Recall that the $6$-dimensional nilmanifolds can be classified in terms of their Lie algebra, up to isomorphisms, in $34$ classes, according to V. V. Morozovs classification \cite{morozov} and $26$ of them admit a symplectic structure.
With explicit computations one can show that all the symplectic $6$-dimensional nilmanifolds with left-invariant symplectic structure as listed for instance in Table $3$ in \cite{angella-kasuya-symplectic} satisfy $\mathcal{H}_{dR}^2(X)\subseteq \mathcal{H}_{d+d^\Lambda}^2(X)$. We ask whether on nilmanifolds endowed with a left-invariant symplectic structure it holds  $\mathcal{H}_{dR}^2(X)\subseteq \mathcal{H}_{d+d^\Lambda}^2(X)$.
\end{rem}

\end{document}